\newcommand{\sprod}[2]{\langle {#1}, {#2}\rangle}
\newcommand{\norm}[1]{{\lvert #1 \rvert}}
\newcommand{\abs}[1]{{\lvert {#1} \rvert}}
\renewcommand{\atop}[2]{\substack{{#1}\\{#2}}}
\newcommand{\ind}[1]{{\mathds{1}_{{#1}}}}
\newcommand{\CC}{\mathbb{C}}
\newcommand{\ZZ}{\mathbb{Z}}
\newcommand{\RR}{\mathbb{R}}
\newcommand{\PP}{\mathbb{P}}
\newcommand{\Ss}{\mathbb{S}}
\newcommand{\calR}{\mathcal{R}}
\newcommand{\calL}{\mathcal{L}}
\newcommand{\calS}{\mathcal{S}}
\newcommand{\calF}{\mathcal{F}}
\newcommand{\calM}{\mathcal{M}}
\newcommand{\calB}{\mathcal{B}}
\newcommand{\RInf}{\calR^\infty_{\alpha} }
\newcommand{\ROrg}{\calR^0_{\alpha} }
\newcommand{\pl}[1]{\foreignlanguage{polish}{#1}}
\newtheorem{theorem}{Theorem}
\newtheorem{proposition}{Proposition}[section]
\newtheorem{lemma}{Lemma}
\newtheorem{corollary}{Corollary}
\newtheorem{example}{Example}
\newtheorem{remark}{Remark}
\title{Asymptotic behavior of densities \\ of unimodal convolution semigroups}
\author{Wojciech Cygan}
 \thanks{Research of W.~Cygan was supported by National Science Centre (Poland), Grant DEC-2013/11/N/ST1/03605}
\author{Tomasz Grzywny}
\author{Bartosz Trojan}
\address{
		Wojciech Cygan\\
		Instytut Matematyczny\\
		Uniwersytet \pl{Wroc{\lll}awski}\\
		Pl. Grun\-waldzki 2/4\\
		50-384 \pl{Wroc{\lll}aw}\\
		Poland}
\email{cygan@math.uni.wroc.pl}
\address{
	Tomasz Grzywny\\
	 \pl{Wydzia{\lll}} Matematyki\\
	Politechnika \pl{Wroc{\lll}awska}\\
	Wyb. \pl{Wyspia\'{n}skiego} 27\\
	50-370 \pl{Wroc\l{}aw}\\
	Poland
}
\email{tomasz.grzywny@pwr.edu.pl}
\address{
		Bartosz Trojan\\
		Instytut Matematyczny\\
		Uniwersytet \pl{Wroc{\lll}awski}\\
		Pl. Grun\-waldzki 2/4\\
		50-384 \pl{Wroc{\lll}aw}\\
		Poland}
\email{Bartosz.Trojan@math.uni.wroc.pl}
\subjclass[2010]{Primary: 60J75, 47D06, 60G51; Secondary:  44A10, 46F12.}
\keywords{asymptotic formula, L\'{e}vy--Khintchine exponent, heat kernel, Green function,
	 isotropic unimodal L\'{e}vy process, subordinate Brownian motion}
\begin{document}
\selectlanguage{english}

\begin{abstract}
We prove the asymptotic formulas for the densities of i\-so\-tro\-pic unimodal
convolution semigroups of probability measures on $\RR^d$ under the assumption that
its L\'{e}vy--Khintchine exponent is regularly varying of index between $0$ and $2$. 
\end{abstract}

\maketitle

\section{Introduction}
Studying the asymptotic behavior of the densities of convolution semigroups of probability measures 
has attracted much attention of experts from mathematics and statistics for many years.
The main goal of this paper is to prove the asymptotic formulas for the densities $p(t,x)$ of \textit{isotropic unimodal}
convolution semigroups of probability measures on $\RR ^d$ assuming that the corresponding L\'{e}vy--Khintchine exponent
$\psi (\xi)$ is regularly varying of index $\alpha$ between 0 and 2.

One of the first results of this kind is due to P\'{o}lya (1923, $d=1$) and to Blumenthal and Getoor (1960, $d>1$)
and provides the asymptotic behavior of the transition density $p_\alpha (t,x)$ of the isotropic $\alpha$-stable
process $\mathbf X_\alpha$ in $\RR ^d$, $\alpha \in (0, 2)$. The remarkable scaling property of the process
$\mathbf X_\alpha$, namely
\[
	p_\alpha (t,x) = 
	t^{-d/\alpha}
	p_\alpha \big(1,t^{-1/\alpha }x \big),
\]
implies that the following \textit{strong ratio limit property} holds
\begin{align}
	\label{SRL_Stable}
	\lim _{t\norm{x}^{-\alpha} \to +\infty}
	\frac{p_\alpha(t,x) }{p_\alpha (t,0)} 
	=
	1,
\end{align}
and that $p_{\alpha }(t,0) = p_{\alpha }(1,0)t^{-d/\alpha}$. Concerning the ratio limit property in general setting
we refer to the articles \cite{MR0132600}, \cite{MR0174089} (irreducible and aperiodic Markov chains) and to the articles
\cite{Stone1}, \cite{Stone2} and \cite{MR2139571} (continuous time processes, in particular L\'{e}vy processes).

Let us recall that the result of P\'{o}lya ($d=1$) and of Blumenthal and Getoor ($d>1$) reads as follows
\begin{align}
	\label{eq:5}
	\lim _{\norm{x} \to +\infty}
	\norm{x}^{d+\alpha}
	p_\alpha(1,x) 
	=
	\mathcal{A}_{d, \alpha}
\end{align}
where
\begin{align}
	\label{eq:8}
	\mathcal{A}_{d, \alpha}
	=
	\alpha 2^{\alpha -1} 
	\pi^{-d/2-1}
	\sin \left(\frac{\alpha \, \pi}{2}\right)
	\Gamma \left(\frac{\alpha}{2}\right)
	\Gamma \left(\frac{\alpha+ d}{2}\right),
\end{align}
equivalently
\[
	\lim_{t \norm{x}^{-\alpha} \to 0} \frac{p_\alpha(t, x)}{t \norm{x}^{-d - \alpha}} = \mathcal{A}_{d, \alpha}.
\]
It follows that \footnote{$A \asymp B$ means that $c B \leq A \leq C B$, for some constants $c,C>0$.}
\[
	p_{\alpha }(t,x) \asymp \min \left\{ p_{\alpha} (t,0), t |x|^{-d-\alpha} \right\}
\]
uniformly in $t>0$ and $x\in \RR ^d$. Observe that for the Cauchy semigroup ($\alpha = 1$) we have the equality
\[
	p_1(t,x) = {\mathcal A}_{d,1} t \left(t^2+|x|^2\right)^{-\frac{d+\alpha}{2}}.
\]

The proofs of statement \eqref{eq:5} given by P\'{o}lya and by Blumenthal and Getoor were based on Fourier analytic
techniques. In \cite{bendikov} Bendikov used a different approach utilizing the Bochner's method of subordination.
For completeness we would like to list other related papers \cite{MR2552308}, \cite{MR1301283}, \cite{MR2827465} and
\cite{MR941977} and \cite{MR1884159}. We also mention that with the aid of subelliptic estimates developed
by G\l{}owacki \cite{glo}, analogous asymptotic formulas were obtained by Dziuba\'nski \cite{MR1085177} for strictly
stable semigroups of measures in the case of homogeneous Lie groups, in particular $\RR^d$.

Let $\mathbf X $ be an isotropic L\'{e}vy process having transition density $p(t,x)$. Then
\begin{align*}
	\int _{\RR ^d} e^{i\sprod{\xi}{x}} p(t, x) {\: \rm d} x
	=
	e^{-t\psi (\xi)}
\end{align*} 
where the L\'{e}vy--Khintchine exponent $\psi$ is a radial function. We also assume that the L\'{e}vy measure has
\textit{unimodal} density. As it was recently proved in the paper \cite{MR3165234}, under weak scaling
properties of $\psi$ at infinity
\begin{align}
	\label{BGR_bounds}
	p(t,x)\asymp \min \left\{ p(t,0), t \norm{x}^{-d} \psi \big(|x|^{-1} \big) \right\}
\end{align}
locally in space and time variables. Hence, if the characteristic exponent varies regularly at infinity one can use
\eqref{BGR_bounds} to obtain estimates for $p(t,x)$. The results of the present article implies the estimates
for large $t$ and $\norm{x}$.

The estimates \eqref{BGR_bounds} are also valid for the case when $\psi$ varies regularly at zero. Here, additionally, we 
need to assume that $e^{-t_0 \psi}$ is integrable on $\RR^d$ for some $t_0 > 0$. Indeed, as a consequence of Theorem
\ref{densityAsymp}, monotonicity of $p(t,\cdot)$ and Remark \ref{Rem_p(t,0)}
(compare the proof of \cite[Theorm 21]{MR3165234}) one can find $M > 0$ such that \eqref{BGR_bounds} holds for
all $\norm{x}, t > M$.

In this article, for $\psi$ varying regularly of index $\alpha \in (0,2)$ at zero, we prove that
\begin{align}
	\label{eq:6}
	\lim_{\atop{\norm{x} \to +\infty}{t \psi(\norm{x}^{-1}) \to 0}}
	\frac{p(t, x)}{t \norm{x}^{-d} \psi\big(\norm{x}^{-1}\big)} = 
	\mathcal{A}_{d,\alpha}
\end{align}
where the constant $\mathcal{A}_{d,\alpha}$ is given by the formula \eqref{eq:8} (see Theorem \ref{densityAsymp}).

The main step in getting the asymptotic \eqref{eq:6} is to find the asymptotic behavior of tails
$\PP\big( \norm{X_t}\geq r \big)$. We prove that for some positive constant $\mathcal{C}_{d, \alpha}$
\begin{align*}
	\lim_{\atop{r \to +\infty}{t \psi(r^{-1}) \to 0}}
	\frac{\PP\big(\norm{X_t} \geq r\big)} {t \psi(r^{-1})} = \mathcal{C}_{d, \alpha}.
\end{align*}
This asymptotic expression may be viewed as an uniform in time variant of the Tauberian theorem. In the proof,
we interpret the asymptotic of the Laplace transform of $\PP\big(\norm{X_t} \geq \sqrt{r} \big)$ as a prescribed
distributional limit for a dense class of test functions. Then using equicontinouity of the distributions we conclude
the convergence. The above technique was previously used in \cite{bct} where subordinated random walks on $\ZZ^d$ were
considered. 

Beside the transition density $p(t,x)$ we also investigate the L\'{e}vy measure $\nu$ of the process $\mathbf X$.
It is rather usual that $\nu$ bears an asymptotic resemblance to $p(t,x )$ since in a vague sense
$\nu(x) = \lim _{t \to 0^+} t^{-1} p(t, x)$. Therefore, from \eqref{eq:6} we may deduce the following
asymptotic behavior of $\nu$ (see Theorem \ref{LevyDensity2})
\begin{align}
	\label{eq:7}
	\lim_{\norm{x} \to +\infty}
	\frac{\nu(x)}{\norm{x}^{-d} \psi\big(\norm{x}^{-1}\big)} =\mathcal{ A}_{d, \alpha}.
\end{align}
The fact that the function $\psi$ appears both in asymptotic formulas for $\nu$ and $p(t, x)$ is natural from
the point of view of the pseudo-differential calculus and the spectral theory. Moreover, as $\psi$ being connected with
the Fourier transform of $p(t,x)$, the asymptotic behavior of $\psi$ at infinity translates into
the asymptotic formulas for $p(t,x)$ and $\nu$ at zero.

A natural question arises:
\begin{quote}
	\em
	Assume that the density $\nu$ of the L\'{e}vy measure of the process $\mathbf X$ with
	the L\'{e}vy--Khintchine exponent $\psi$ has the asymptotic behavior of the form \eqref{eq:7} at infinity.
	Is it true that $\psi $ has a certain prescribed behavior at zero?
\end{quote}
Surprisingly, the answer is affirmative. It turns out that $\psi$ is necessarily regularly varying 
(see Theorem \ref{LevyDensity2}). This interesting observation is a consequence of an application
of the famous Drasin--Shea theorem \cite[Theorem 6.2]{shea}. In the one dimensional case this result was partially obtained in
the unpublished manuscript by Bendikov \cite{Ben_unpublished}. 

In Subsection \ref{sec:4.2} we investigate the asymptotic of the potential measure provided $d \geq 3$
(see Theorem \ref{GreenAsymp}) which allows us to prove that for some positive constant $\mathcal{\tilde{A}}_{d,\alpha}$
\[
	\lim_{\norm{x} \to +\infty} \norm{x}^d \psi\big(\norm{x}^{-1}\big) G(x) = \mathcal{\tilde{A}}_{d,\alpha}
\]
where $G(x)$ is a part of the density of the potential measure absolutely continuous with respect to Lebesgue
measure (see Corollary \ref{cor:1}). Such limit was proved in \cite[Theorem 3.1 and Theorem 3.3]{rsv} for a 
class subordinate Brownian motions governed by complete Bernstein functions using the Tauberian theorem for
the potential measure of subordinator.

We also prove the corresponding theorems with zero replaced by infinity and vice versa (see Theorem
\ref{Tail2}, Theorem \ref{densityAsymp1}, Theorem \ref{LevyDensity} and Theorem \ref{GreenAsymp2}).

A vast class of examples of isotropic unimodal L\'{e}vy processes constitute subordinate Brownian
motions. For such processes we have $\psi (x) = \phi \big(|x|^2\big)$ where $\phi $ is the Laplace
exponent of the corresponding subordinator. In particular, $\phi$ is a Bernstein function. For instance
the monograph \cite{ssv} gives many cases and classes of Bernstein functions in
its closing list of examples. Moreover, \cite[Theorem 2.5]{bsc} shows that for a given function $f$ such that
$f(x)$ and $xf'(x)$ are regularly varying of index $\beta\in[0,1)$ at zero, one can find a complete Bernstein function
$\phi$ such that 
\[
	\lim_{x \to 0^+} \frac{\phi(x)}{f(x)} = 1.
\] 
Our results in turn imply the asymptotic formulas for the L\'{e}vy measure and the transition density
of the corresponding subordinate Brownian motion. Let us observe that the symmetric $\alpha$-stable processes
are subordinate Brownian motions since we may take $\phi (\lambda )=\lambda ^{\alpha}$, $\alpha  \in (0, 1)$.
Thus our theorems may be regarded as significant extensions of the classical result \eqref{eq:5}.

In the present article we covered the case when the L\'{e}vy--Khintchine exponent $\psi$ is $\alpha$-regular for
$\alpha \in (0, 2)$. The corresponding results for the case $\alpha = 0$ is the subject of the forthcoming paper 
\cite{grt}. The case $\alpha = 2$ is the ongoing project.

\section{Preliminaries}
Let $\mathbf{X}=(X_t : t \geq 0)$ be an isotropic L\'{e}vy process in $\RR ^d$, i.e. $\mathbf{X}$ is a c\'{a}dl\'{a}g
stochastic process with a distribution denoted by $\PP$ such that $X_0=0$ almost surely, the increments of $\mathbf{X}$
are independent with a radial distribution $p(t, \,\cdot\,)$ on $\RR ^d\setminus \{0\}$. This is equivalent with radiality
of the L\'{e}vy measure and the L\'{e}vy--Khintchine exponent. In particular, the characteristic
function of $\mathbf{X}$ has a form
\begin{align}
	\label{eq:9}
	\int_{\RR^d} e^{i \sprod{\xi}{x}} p(t, {\rm d} x) = e^{-t\psi (\xi)}
\end{align} 
where
\begin{equation}
	\label{eq:28}
	\psi (\xi) = \int_{\RR ^d} \big(1-\cos\sprod{\xi}{x}\big) \nu ({\rm d}x)
	+ \eta \norm{\xi}^2,
\end{equation}
for some $\eta \geq 0$. We are going to abuse the notation by setting $\psi(r)$ for $r > 0$ to be equal to $\psi(\xi)$ for any $\xi \in \RR^d$
with $\norm{\xi} = r$. Since the function $\psi$ is not necessary monotonic, it is conveniently to work with $\psi^*$
defined by 
\begin{equation*}
	\psi^*(u) = \sup_{s \in [0, u]} \psi(s)
\end{equation*}
for $u \geq 0$. Let us recall that for $r, u \geq 0$ (see \cite[Theorem 2.7]{WHoh})
\begin{equation}
	\label{eq:2}
	\psi (ru)
	\leq 
	\psi^*(ru)
	\leq 2(r^2+1)
	\psi^*(u).
\end{equation}

A Borel measure $\mu$ is isotropic \emph{unimodal} if it is absolutely continuous on $\RR^d \setminus \{0\}$
with a radial and radially non-increasing density. A L\'{e}vy process $\mathbf X$ is isotropic unimodal if
$p(t, \: \cdot \:)$ is isotropic unimodal for each $t > 0$. In Section 4 we consider a subclass of isotropic processes
consisting of isotropic unimodal L\'{e}vy processes. They were characterized by Watanabe in \cite{Watanabe} as those
having the isotropic unimodal L\'{e}vy measure. A remarkable property of these processes is (see \cite[Proposition 2]{MR3165234})
\begin{equation}
	\label{eq:27}
	\psi^*(u) \leq \pi^2 \psi(u)
\end{equation}
for all $u \geq 0$.

\subsection{Regular variation}
A function $\ell: [x_0, +\infty) \rightarrow (0, +\infty)$, for some $x_0 > 0$, is called
\emph{slowly varying at infinity} if for each $\lambda > 0$
\[	
	\lim_{x \to +\infty} \frac{\ell(\lambda x)}{\ell(x)} = 1.
\]
We say that $f: [x_0, +\infty) \rightarrow (0, +\infty)$ is \emph{regularly varying of index
$\alpha \in \RR$ at infinity}, if $f(x) x^{-\alpha}$ is slowly varying at infinity. The set of
regularly varying functions of index $\alpha$ at infinity is denoted by $\RInf$. In particular, if $f \in \RInf$
then
\[
	\lim_{x \to +\infty}
	\frac{f(\lambda x)}{f(x)}
	=\lambda^\alpha,\quad \lambda>0.
\]
A function $f$ is regularly varying of index $\alpha \in \RR$ at \emph{zero} if $x \mapsto f\big(x^{-1}\big)^{-1}$
belongs to $\RInf$. The set of regularly varying functions of index $\alpha$ at zero is denoted by $\ROrg$. 
The following property of a slowly varying function at zero appears to be very useful (see \cite{pot}, see also
\cite[Theorem 1.5.6]{bgt}). For every $C > 1$ and $\epsilon > 0$ there is $0 < \delta \leq x_0$ such that for all
$0 < x, y \leq \delta$
\begin{equation}
	\label{eq:14}
	\ell(x) \leq C \ell(y) \max\{x/y, y/x\}^\epsilon.
\end{equation}

\section{Strong Ratio Limit Theorem}
%Introductory text:  SRLT on compacts, BGR bound and scaling, our tasks
In this section we discuss the strong ratio limit property. Let $\mathbf{X}$ be an isotropic L\'{e}vy process
in $\RR ^d$ with the L\'{e}vy--Khintchine exponent $\psi$ satisfying
\begin{equation}
	\label{eq:3}
	e^{-t_0 \psi} \in L^1\big(\RR ^d\big)
\end{equation}
for some $t_0 > 0$. It is well-known that then $\mathbf{X}$ has a continuous transition density $p(t,x)$ such that
for every compact subset $K \subset \RR^d$
\begin{equation}
	\label{eq:slr}
	\lim _{t\to \infty}\frac{p(t,x)}{p(t,0)}=1
\end{equation}
uniformly with respect to $x \in K$. Our aim is to extend this property to the non-compact case. 

\begin{theorem}
	\label{regular_SRLT}
	If $\psi \in \ROrg$ for some $\alpha \in (0, 2]$ and $e^{-t_0 \psi}\in L^1\big(\RR ^d\big)$, for some 
	$t_0 > 0$, then
	\begin{align*}
		\lim _{\atop{t\to +\infty}{t\psi (\norm{x}^{-1})\to +\infty}} 
		\frac{p(t,x)}{p(t,0)}=1.
	\end{align*}
\end{theorem}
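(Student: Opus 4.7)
The plan is to use Fourier inversion together with a rescaling adapted to the regular variation of $\psi$. Under the integrability assumption \eqref{eq:3} the exponential $e^{-t\psi}$ is integrable for every $t\geq t_0$, so
\[
p(t,x) = (2\pi)^{-d}\int_{\RR^d}\cos\sprod{\xi}{x}\, e^{-t\psi(\xi)}\, d\xi,
\]
and the conclusion of the theorem is equivalent to
\[
\frac{\int_{\RR^d}\bigl(1-\cos\sprod{\xi}{x}\bigr)e^{-t\psi(\xi)}d\xi}{\int_{\RR^d}e^{-t\psi(\xi)}d\xi}\longrightarrow 0.
\]
I would introduce a scale $\rho_t\to 0^+$ defined by a generalized inverse of $\psi^*$, so that $t\psi^*(\rho_t)\asymp 1$. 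Since $\psi\in\ROrg$ with $\alpha\in(0,2]$, the function $t\mapsto\rho_t$ is regularly varying of index $-1/\alpha$ at infinity, and by uniform convergence of $\psi(\lambda r)/\psi(r)$ on compact subsets of $(0,+\infty)$ together with \eqref{eq:14}, the hypothesis $t\psi(\norm{x}^{-1})\to+\infty$ is equivalent to $\rho_t\norm{x}\to 0$.

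After the substitution $\xi=\rho_t\eta$ the previous ratio becomes
\[
\frac{\int_{\RR^d}(1-\cos\sprod{\eta}{\rho_t x})e^{-t\psi(\rho_t\eta)}d\eta}{\int_{\RR^d}e^{-t\psi(\rho_t\eta)}d\eta}.
\]
For each fixed $\eta\neq 0$ radial regular variation gives
\[
t\psi(\rho_t\eta)=\frac{\psi(\rho_t\norm{\eta})}{\psi(\rho_t)}\cdot t\psi(\rho_t)\longrightarrow\norm{\eta}^\alpha,
\]
so the denominator integrand tends pointwise to $e^{-\norm{\eta}^\alpha}$, while the numerator integrand tends to $0$ because $\cos\sprod{\eta}{\rho_t x}\to 1$. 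If dominated convergence can be justified on both integrals, the denominator converges to $\int_{\RR^d}e^{-\norm{\eta}^\alpha}d\eta>0$ and the numerator to $0$, which settles the theorem.

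The main obstacle is producing an $L^1(\RR^d)$ majorant of $\eta\mapsto e^{-t\psi(\rho_t\eta)}$ uniform for large $t$. I would split at a fixed threshold $|\xi|=\delta$, equivalently $\norm{\eta}=\delta/\rho_t$. On the near region $\rho_t\norm{\eta}\leq\delta$, Potter's bound \eqref{eq:14} applied to the slowly varying part of $\psi$ yields $t\psi(\rho_t\eta)\geq c\norm{\eta}^{\alpha-\epsilon}$ for $\norm{\eta}\geq 1$, hence the majorant $e^{-c\norm{\eta}^{\alpha-\epsilon}}$, integrable provided $\epsilon<\alpha$. On the far region $\{|\xi|>\delta\}$ the hypothesis \eqref{eq:3} enters: since $\psi\geq 0$ throughout and $\psi\geq c_\delta>0$ on $\{|\xi|>\delta\}$ (positivity of $c_\delta$ follows because $e^{-t_0\psi}\in L^1$ forces $\psi\to+\infty$ off a set of finite measure, combined with continuity and strict positivity of $\psi$ away from the origin), one has
\[
\int_{|\xi|>\delta}e^{-t\psi(\xi)}d\xi\leq e^{-(t-t_0)c_\delta}\bigl\|e^{-t_0\psi}\bigr\|_1\longrightarrow 0.
\]

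Combining the pointwise limits with dominated convergence on the near region and the exponential decay on the far region shows that both $\rho_t^{-d}\int_{\RR^d} e^{-t\psi(\rho_t\eta)}d\eta$ and $\rho_t^{-d}\int_{\RR^d}(1-\cos\sprod{\eta}{\rho_t x})e^{-t\psi(\rho_t\eta)}d\eta$ converge, the former to a positive constant and the latter to $0$; dividing yields $p(t,x)/p(t,0)\to 1$. The technical heart of the argument is the construction of the uniform $L^1$ majorant; the scaling, the pointwise limits, and the far-region estimate are then routine applications of the regular variation toolkit and the integrability hypothesis.
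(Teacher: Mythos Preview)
Your argument is correct and shares the same skeleton as the paper's proof: Fourier inversion, a near/far split at a fixed threshold $|\xi|=\delta$, Potter-type bounds on the near region, the $L^1$ hypothesis together with $\inf_{|\xi|\geq\delta}\psi>0$ on the far region, and the reduction to $\rho_t|x|\to 0$ (which the paper writes as $|x|\,\psi^-(1/t)\to 0$ and proves via \eqref{eq:2}). The one genuine difference is in how the numerator is handled. The paper controls $p(t,0)-p(t,x)$ through a derivative estimate,
\[
|D_x p(t,y)|\lesssim\int_{\RR^d}e^{-t\psi(\xi)}|\xi|\,d\xi\lesssim\bigl(\psi^-(1/t)\bigr)^{d+1},
\]
and then the mean value theorem, which yields the quantitative bound $\bigl|p(t,x)/p(t,0)-1\bigr|\lesssim|x|\,\psi^-(1/t)$. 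Your rescaling-plus-dominated-convergence argument is a softer version of the same computation that does not produce a rate. The quantitative form has the advantage that the paper reuses it verbatim (in the Remark immediately after the theorem, and in the proof of the analogous result for $\psi\in\RInf$, where the estimate is needed for small $t$), but for the bare statement your route is adequate. Two minor points worth tightening: define $\rho_t$ so that $t\psi(\rho_t)=1$ exactly (rather than $\asymp 1$) if you want the pointwise limit $e^{-|\eta|^\alpha}$ and not merely two-sided bounds; and the justification that $\psi\geq c_\delta$ on $\{|\xi|\geq\delta\}$ is cleaner via the Riemann--Lebesgue lemma applied to $e^{-2t_0\psi}$ (this is the paper's device) than via the informal ``$\psi\to\infty$ off a set of finite measure''.
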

\begin{proof}
	First, we show that there is $C > 0$ such that for $t$ large enough
	\begin{equation}
		\label{eq:21}
		\int _{\RR ^d}
		e^{-t\psi (\xi)} \norm{\xi}  {\: \rm d}\xi 
		\leq
		C
		\left(\psi ^{-}(1/t)\right)^{d+1}
	\end{equation}
	where $\psi^{-}(u) = \min\{r \geq 0 : \psi(r) \geq u\}$. We observe that integrability of $e^{-2 t_0 \psi}$ on $\RR^d$ implies that  $e^{-2 t_0 \psi} < 1$. Moreover, by Riemann--Lebesgue lemma
	$e^{-2 t_0 \psi(\xi)}$ tends to zero if $\norm{\xi}$ approaches infinity. Hence, for any $\delta>0$ there is $\gamma > 0$ such that
	for all $\norm{\xi} \geq \delta$ 
	\[
		e^{-2 t_0 \psi(\xi)} \leq 1 - \gamma.
	\]
	Therefore, for $t > 4 t_0$
	\begin{equation}
		\label{eq:29}
		\int_{\norm{\xi} \geq \delta} e^{-t \psi(\xi)} \norm{\xi} {\: \rm d}\xi
		\leq
		(1 - \gamma)^{t/(4t_0)} \int_{\norm{\xi} \geq \delta} e^{-2 t_0 \psi(\xi)} \norm{\xi} {\: \rm d} \xi.
	\end{equation}
	By \eqref{eq:14}, there is $\delta > 0$ such that
	for all $0 < \norm{x}, \norm{y} \leq \delta$,
	\begin{equation}
		\label{eq:17}
		\psi(x) \leq 2 \psi(y) (\norm{x}/\norm{y})^\alpha \max\{\norm{x}/\norm{y}, \norm{y}/\norm{x}\}^\epsilon.
	\end{equation}
	 Since $ \psi(\psi^{-}(1/t)) =1/t$ and
	for $t > t_0$ sufficiently large $\psi^{-}(1/t) < \delta$, we get
	\[
		2t \psi(\xi) \geq \big(\norm{\xi} \psi^{-}(1/t)^{-1} \big)^{\alpha}
		\min\left\{ \norm{\xi} \psi^{-}(1/t)^{-1}, \norm{\xi}^{-1} \psi^{-}(1/t)\right\}^{\epsilon}.
	\]
	Hence, by the change of variables $u=\xi \psi^{-}(1/t)^{-1}$ we obtain	\footnote{We write $A \lesssim B$ if there an absolute constant $C > 0$ such that $A \leq C B$.}
	\begin{equation}
		\label{eq:30}
		\int_{\norm{\xi} \leq \delta}
		e^{-t\psi(\xi)} \norm{\xi} {\: \rm d}\xi
		\lesssim
		\big(\psi^{-}(1/t)\big)^{d+1}.
	\end{equation}
	By putting \eqref{eq:30} together with \eqref{eq:29} we arrive at \eqref{eq:21}.

	Now, by the Fourier inversion formula for $x, y \in \RR^d$
	\[
		\abs{D_x p(t, y)}
		\lesssim
		\int_{\RR^d} 
		e^{-t\psi(\xi)} \abs{\sprod{\xi}{x}} {\: \rm d}\xi.
	\]
	Hence, by the mean value theorem and the estimate \eqref{eq:21} we get
	\begin{equation*}
		\abs{p(t, 0) - p(t, x)} 
		\leq
		\sup_{0 \leq \theta \leq 1}
		\abs{D_x p(t, \theta x)}
		\lesssim
		\norm{x} \big(\psi^{-}(1/t)\big)^{d+1}.
	\end{equation*}
	Because	
	\begin{equation}
		\label{eq:32}
		p(t, 0) \gtrsim \int_{\norm{\xi} \leq \psi^{-}(1/t)} e^{-t \psi(\xi)} {\: \rm d}\xi
		\gtrsim
		\big(\psi^{-}(1/t)\big)^d,
	\end{equation}
	we obtain
	\begin{equation}
		\label{eq:31}
		\abs{p(t, 0) - p(t, x)} \leq C p(t, 0) \norm{x} \psi^{-}(1/t).
	\end{equation}
	Therefore, it is enough to show
	\begin{equation}
		\label{eq:16}
		\lim_{t \psi(\norm{x}^{-1}) \to +\infty} \norm{x} \psi^{-}(1/t) = 0.
	\end{equation}
	By \eqref{eq:2}, for $\lambda \geq 1$ and $u \geq 0$ we have
	\[
		\psi^*(\lambda u) \leq 4 \lambda^2 \psi^*(u),
	\]
	thus
	\[
		\psi^{-} (\lambda u) \geq \frac12 \sqrt{\lambda} \psi^{-}(u).
	\]
	By taking $\lambda = t \psi\big(\norm{x}^{-1}\big)$ and $u = 1/t$ we obtain
	\begin{equation}
		\label{eq:34}
		\norm{x} \psi^{-}(1/t) 
		\leq \frac12
		\big(t \psi\big(\norm{x}^{-1}\big)\big)^{-1/2},
	\end{equation}
	and the proof is finished.
\end{proof}
\begin{remark}
	Instead of regular variation at zero one may assume that there are $\alpha \in (0, 2]$ and $c>0$ such that for all
	$\lambda \geq 1$, $x \in \RR^d$
	\begin{equation}
		\label{eq:15}
		\psi (\lambda x)\geq c \, \lambda ^{\alpha}\psi (x).
	\end{equation}
	Then for each $t > 0$, $e^{-t \psi}$ is integrable on $\RR^d$, (see \cite[Lemma 7]{MR3165234}).
	Hence, a measure $p(t, {\rm d} x)$ has a smooth and integrable density $p(t, x)$ (see \cite[Theorem 1]{Schilling}).
	In particular, we have \eqref{eq:21}, what again implies \eqref{eq:31}. Using \eqref{eq:16} we conclude
	\begin{align*}
		\lim _{t\psi (\vert x\vert ^{-1})\to \infty}\frac{p(t,x)}{p(t,0)}=1.
	\end{align*}
\end{remark}

For the case when the L\'{e}vy--Khintchine exponent $\psi$ varies regularly at infinity we have the following.
\begin{proposition}
	If $\psi \in \RInf$ for some $\alpha \in (0, 2]$ then
	\begin{align*}
		\lim _{\atop{\norm{x}\to 0}{t\psi (\norm{x}^{-1})\to +\infty}} \frac{p(t,x)}{p(t,0)}=1.
	\end{align*}
\end{proposition}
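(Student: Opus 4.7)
I would adapt the proof of Theorem~\ref{regular_SRLT}: establish
\[
	\frac{\abs{p(t,0) - p(t,x)}}{p(t,0)} \lesssim \norm{x}\,\psi^{-}(1/t)
\]
and then verify that the right-hand side vanishes in the prescribed limit. Since $\psi \in \RInf$ with $\alpha > 0$, Potter's bound gives $\psi(\xi) \gtrsim \norm{\xi}^{\alpha/2}$ for $\norm{\xi}$ large, so $e^{-t\psi} \in L^1(\RR^d)$ for every $t > 0$ and a continuous transition density $p(t,x)$ exists automatically. Note that the relevant regime is now $t \to 0^+$, in which $\psi^{-}(1/t) \to +\infty$, mirroring the $\ROrg$ setting of Theorem~\ref{regular_SRLT} where $t \to +\infty$ forced $\psi^{-}(1/t) \to 0$.

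The main step is the analog of \eqref{eq:21}, namely
\[
	\int_{\RR^d} e^{-t\psi(\xi)}\,\norm{\xi}\,{\rm d}\xi \lesssim \big(\psi^{-}(1/t)\big)^{d+1}
\]
for all sufficiently small $t$. I would split the integral at some fixed radius $M$ beyond which Potter's bound for the slowly varying factor of $\psi$ at infinity is available. The inner piece $\{\norm{\xi} \leq M\}$ contributes a universal constant which is absorbed into $(\psi^{-}(1/t))^{d+1} \to +\infty$ as $t \to 0$. For the outer piece, Potter combined with $\psi(\psi^{-}(1/t)) \geq 1/t$ yields a lower bound of the form $t\psi(\xi) \gtrsim \min\{(\norm{\xi}/\psi^{-}(1/t))^{\alpha-\epsilon}, (\norm{\xi}/\psi^{-}(1/t))^{\alpha+\epsilon}\}$ valid for $\norm{\xi}, \psi^{-}(1/t) \geq M$, and the change of variable $u = \xi / \psi^{-}(1/t)$ then produces the claimed factor $(\psi^{-}(1/t))^{d+1}$.

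The remainder copies the proof of Theorem~\ref{regular_SRLT}: Fourier inversion together with the mean value theorem gives $\abs{p(t,0) - p(t,x)} \lesssim \norm{x}(\psi^{-}(1/t))^{d+1}$, while the lower bound $p(t,0) \gtrsim (\psi^{-}(1/t))^d$ follows from $e^{-t\psi(\xi)} \geq e^{-1}$ on $\{\norm{\xi} < \psi^{-}(1/t)\}$. The reduction to $\norm{x}\,\psi^{-}(1/t) \to 0$ is exactly \eqref{eq:34}, whose derivation uses only \eqref{eq:2} and is insensitive to where $\psi$ is regularly varying. For bounded $t$ the convergence reduces to joint continuity of $p$ together with $\norm{x} \to 0$, while for $t \to +\infty$ one may alternatively invoke \eqref{eq:slr} since eventually $x$ lies in a compact set. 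I expect the main obstacle to be the absorption step inside the integral estimate, because the boundary term produced by the Potter split at infinity has no analogue in Theorem~\ref{regular_SRLT}; its absorption is possible only because $\psi^{-}(1/t) \to +\infty$ in the present regime.
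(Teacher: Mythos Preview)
Your proposal is correct and follows essentially the same route as the paper. The paper obtains the integral bound
\[
	\int_{\RR^d} e^{-t\psi(\xi)}\,\norm{\xi}\,{\rm d}\xi \leq C_T\big(\psi^{-}(1/t)\big)^{d+1},\qquad t\in(0,T),
\]
by first extracting from Potter's inequality the lower scaling $\psi(\lambda x)\geq c\,\lambda^{\alpha/2}\psi(x)$ for $x\geq\underline{x}$, $\lambda\geq 1$, and then invoking \cite[Lemma~16]{MR3165234}; your split at radius $M$ with the change of variables $u=\xi/\psi^{-}(1/t)$ is precisely the content of that lemma, so you are simply spelling out what the paper outsources. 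The subsequent steps (mean value theorem, the lower bound \eqref{eq:32} on $p(t,0)$, and the reduction via \eqref{eq:34}) are identical. The only organizational difference is that the paper uses two regimes, $t<T$ (integral estimate) and $t\geq T$ (the compact-set strong ratio limit \eqref{eq:slr}), whereas you introduce an intermediate ``bounded $t$'' case via joint continuity; this extra case is harmless but unnecessary, since the integral estimate automatically extends from small $t$ to all of $(0,T)$ with a $T$-dependent constant (the integral is monotone in $t$ and $(\psi^{-}(1/t))^{d+1}$ is bounded below on $[t_1,T)$).
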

\begin{proof}
	By \ref{eq:14} we have that for any $\underline{x}>0$ there is $c>0$, if $x \geq \underline{x}$ and $\lambda \geq 1$ then
	\[
		\psi (\lambda x)\geq c\, \lambda ^{\alpha/2} \psi (x).
	\]
	Whence, by \cite[Lemma 16]{MR3165234}, for any $T > 0$ there is $C_T > 0$ such that for all $t \in (0, T)$
	\begin{align*}
		\int _{\RR ^d}
		e^{-t\psi (\xi)}
		\norm{\xi} {\: \rm d} \xi
		\leq
		C_T
		\left(\psi^{-}(1/t)\right)^{d+1}.
	\end{align*}
	Again, by the mean value theorem and \eqref{eq:32} we get
	\begin{equation}
		\label{eq:35}
		\abs{p(t,0) - p(t,x)} 
		\lesssim 
		\norm{x} p(t, 0) \psi^{-} (1/t),
	\end{equation}
	for $t \in (0, T)$.
	
	Let $\epsilon > 0$. By \eqref{eq:slr}, there is $T > 0$ such that for all $t \geq T$ and $\norm{x} \leq \epsilon$
	\[
		\abs{p(t, x) - p(t, 0)} < \epsilon p(t, 0).
	\]
	Therefore, by \eqref{eq:34} and \eqref{eq:35}, it is enough to take 
	$t \psi\big(\norm{x}^{-1}\big) \geq C_T^2 \epsilon^{-2}$.
\end{proof}

\begin{remark}
	\label{Rem_p(t,0)}
	Writing the Fourier inversion formula in polar coordinates we obtain
	\[
		p(t,0) = \frac{2^{1-d}}{\pi ^{d/2}\Gamma (d/2)}\int _0^\infty e^{-t\psi (r)}r^{d-1} {\: \rm d} r.
	\]
	Hence, after the change of the variables we get
	\begin{align*}
		p(t,0) 
		= 
		\frac{2^{1-d}}{d\pi ^{d/2}\Gamma (d/2)}
		\int _0^\infty e^{-tr}{\: \rm d} \left\{\left(\psi ^-(r)\right)^d\right\}.
	\end{align*}
	Next, applying Karamata theory \cite[Theorem 1.7.1 and 1.7.1']{bgt} we can calculate
	\begin{align*}
		\psi \in \ROrg 
		\Longrightarrow 
		\lim _{t \to +\infty } 
		\frac{p(t,0)}{ \left(\psi ^-(1/t)\right)^d} = 
		\frac{2^{1-d}\Gamma (1+d/\alpha)}{d \pi ^{d/2}\Gamma (d/2)}
	\end{align*}
	and 
	\begin{align*}
		\psi \in \RInf \Longrightarrow \lim _{t\to 0^+ }\frac{p(t,0)}{ \left(\psi ^-(1/t)\right)^d} = 
		\frac{2^{1-d}\Gamma (1+d/\alpha)}{d \pi ^{d/2}\Gamma (d/2)}.
	\end{align*}
\end{remark}
\begin{remark}
	Let us mention that in the case when $t\psi\big(\norm{x}^{-1}\big)$ tends neither to zero nor to infinity,
	the strong ratio limit property may fail. To illustrate this it is enough to consider the Cauchy semigroup
	with the transition density $p_1(t,x)$, cf. \eqref{eq:5}, for which we have
	\begin{equation}
		\label{eq:33}
		\frac{p_1(t,x)}{p_1(t,0)} = \Big( 1+\frac{\norm{x}^2}{t^2}\Big)^{-\frac{d+1}{2}}.
	\end{equation}
	Now, if $t \norm{x}^{-1}$ goes to $\eta$, $\eta \in (0, +\infty)$, then the limit of the ratio \eqref{eq:33}
	depends on $\eta$.
\end{remark}

\section{Asymptotic behavior of the tails}
Suppose that $\mathbf{X}=(X_t: t\geq 0)$ is an isotropic L\'{e}vy process in $\RR ^d$ with the L\'{e}vy--Khintchine
exponent $\psi$. In this Section we prove a Tauberian-like theorem for tails of $\mathbf{X}$. For $t > 0$, we set
\[
	F_t(r)= \PP \big( \abs{X_t} \geq \sqrt{r}\big), \quad r \geq 0.
\]
For a function $f: [0, +\infty) \rightarrow \CC$ its Laplace transform $\calL f$ is defined by
\[
	\calL f(\lambda) = \int_0^\infty e^{-\lambda r} f(r) {\: \rm d}r.
\]
\begin{lemma}
	\label{lem:1}
	If $\psi \in \ROrg$ for some $\alpha \in [0, 2]$ then
	\begin{align*}
		\lim_{\atop{\lambda \to 0^+}{t \psi(\sqrt{\lambda}) \to 0}}
		\frac{\lambda \calL F_t  (\lambda )}{t\psi (\sqrt{\lambda})}
		=2^\alpha \frac{\Gamma \big( (d+\alpha )/2\big)}{\Gamma (d/2)}.
	\end{align*}
\end{lemma}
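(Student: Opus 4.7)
The plan is to rewrite $\lambda \calL F_t(\lambda)$ as a Bochner-type integral and then apply dominated convergence. Since $F_t(r) = \PP(\norm{X_t}^2 \geq r)$, Fubini gives
\[
	\lambda \calL F_t(\lambda) = \EE\bigl[1 - e^{-\lambda \norm{X_t}^2}\bigr].
\]
To couple this to $\psi$, I would use the Gaussian identity $e^{-\lambda \norm{x}^2} = \EE[e^{i \sqrt{2\lambda}\sprod{Z}{x}}]$ where $Z \sim N(0, I_d)$ is independent of $\mathbf{X}$. Conditioning on $Z$ and applying \eqref{eq:9} together with radiality of $\psi$, I get $\EE[e^{-\lambda \norm{X_t}^2}] = \EE[e^{-t \psi(\sqrt{2\lambda} \norm{Z})}]$; passing to polar coordinates yields the key identity
\[
	\lambda \calL F_t(\lambda) = \frac{2^{1-d/2}}{\Gamma(d/2)} \int_0^\infty r^{d-1} e^{-r^2/2} \bigl(1 - e^{-t\psi(\sqrt{2\lambda} r)}\bigr) \, dr.
\]

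Next, I divide by $t \psi(\sqrt{\lambda})$ and move the limit inside the integral. For each fixed $r > 0$, regular variation of $\psi$ at zero gives $\psi(\sqrt{2\lambda} r)/\psi(\sqrt{\lambda}) \to 2^{\alpha/2} r^\alpha$, and combined with $t\psi(\sqrt\lambda) \to 0$ this forces $t \psi(\sqrt{2\lambda}r) \to 0$. Factoring
\[
	\frac{1 - e^{-t\psi(\sqrt{2\lambda}r)}}{t\psi(\sqrt\lambda)} = \frac{1 - e^{-t\psi(\sqrt{2\lambda}r)}}{t\psi(\sqrt{2\lambda}r)} \cdot \frac{\psi(\sqrt{2\lambda}r)}{\psi(\sqrt\lambda)}
\]
then shows the pointwise limit of the integrand is $r^{d-1} e^{-r^2/2} \cdot 2^{\alpha/2} r^\alpha$. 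To justify the interchange, I use $1 - e^{-u} \leq u$ to dominate the ratio above by $\psi(\sqrt{2\lambda}r)/\psi(\sqrt\lambda)$, and then build an integrable majorant by splitting at $r = \delta/\sqrt{2\lambda}$: on the inner region the Potter estimate \eqref{eq:14}, applied to the slowly varying component of $\psi$ (writing $\psi(y) = y^\alpha \ell(y)$), produces a bound of the form $C\, r^{\alpha}\max(r^\epsilon, r^{-\epsilon})$; on the outer region the growth bound \eqref{eq:2} gives $\psi(\sqrt{2\lambda}r) \leq 2(2r^2 + 1)\psi^*(\sqrt{\lambda})$, and boundedness of $\psi^*(\sqrt{\lambda})/\psi(\sqrt{\lambda})$ as $\lambda \to 0^+$ (a standard consequence of regular variation of positive index) converts this into $C(r^2 + 1)$. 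Both dominating functions are integrable against $r^{d-1} e^{-r^2/2}\, dr$ provided $\epsilon < d + \alpha$.

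Granted the interchange, it remains to evaluate
\[
	\int_0^\infty r^{d+\alpha - 1} e^{-r^2/2}\, dr = 2^{(d+\alpha)/2 - 1} \Gamma\bigl((d+\alpha)/2\bigr)
\]
by the substitution $u = r^2/2$, and collect constants:
\[
	\frac{2^{1-d/2}}{\Gamma(d/2)} \cdot 2^{\alpha/2} \cdot 2^{(d+\alpha)/2 - 1} \Gamma\bigl((d+\alpha)/2\bigr) = 2^\alpha \frac{\Gamma((d+\alpha)/2)}{\Gamma(d/2)},
\]
which is the claimed constant. The step I expect to cost the most work is the uniform dominating function: the outer-region bound hinges on controlling $\psi^*(\sqrt\lambda)/\psi(\sqrt\lambda)$ as $\lambda \to 0^+$, which is routine for $\alpha \in (0,2]$ but requires extra care at the borderline $\alpha = 0$, where the slowly varying factor $\ell$ can oscillate and $\psi^*$ need not be comparable to $\psi$; there one would have to exploit additional structure of the L\'evy--Khintchine exponent to close the estimate.
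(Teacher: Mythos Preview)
Your approach is essentially the paper's: the Gaussian identity you use is a probabilistic repackaging of the paper's formula \eqref{eq:25}, and after polar coordinates your integral representation is the paper's \eqref{eq:4} up to the harmless substitution $r \mapsto r\sqrt{2}$. The dominated convergence argument, including the factorisation and the bound $1-e^{-u}\leq u$, is identical.

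The one genuine gap is the case $\alpha=0$, which you flag but do not close. You correctly observe that $\psi^*(\sqrt{\lambda})/\psi(\sqrt{\lambda})$ need not stay bounded when $\psi$ is merely slowly varying, so your outer-region estimate breaks down. However, no ``additional structure of the L\'evy--Khintchine exponent'' is needed: the paper handles this case using only Potter bounds. On the outer region $r\sqrt{\lambda}\geq \delta$ one has, by \eqref{eq:2}, $\psi(r\sqrt{\lambda})\leq \psi^*(r\sqrt{\lambda})\lesssim r^2\lambda$; meanwhile Potter's inequality \eqref{eq:14} applied in the other direction gives the \emph{lower} bound $\psi(\sqrt{\lambda})\gtrsim \lambda^{\epsilon/2}$ for $\sqrt{\lambda}\leq\delta$. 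Combining these yields $\psi(r\sqrt{\lambda})/\psi(\sqrt{\lambda})\lesssim r^2\lambda^{1-\epsilon/2}\lesssim r^2$, which is the integrable majorant you wanted. So the fix is to replace the appeal to $\psi^*/\psi$ by this Potter lower bound on $\psi$ itself; once you do this, your argument is complete for all $\alpha\in[0,2]$ and coincides with the paper's.
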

\begin{proof}
	Let us observe that
	\[
		\lambda \calL F_t (\lambda)
		=
		\int_{\RR^d} \big(1 - e^{-\lambda \norm{x}^2}\big) p(t, {\rm d} x).
	\]
	Since
	\begin{equation}
		\label{eq:25}
		1 - e^{-\norm{x}^2}
		=
		(4 \pi)^{-d/2}
		\int_{\RR^d} \big(1 - \cos \sprod{x}{\xi}\big) e^{-\frac{\norm{\xi}^2}{4}} {\: \rm d}\xi,
	\end{equation}
	by the Fubini--Tonelli's theorem and \eqref{eq:9} we get
	\begin{align*}
		\lambda \calL F_t (\lambda)
		&=
		(4 \pi)^{-d/2}
		\int_{\RR^d} \int_{\RR^d}
		\big(1 - \cos \sprod{x}{\xi \sqrt{\lambda}}\big) p(t, {\rm d} x) 
		e^{-\frac{\norm{\xi}^2}{4}} {\: \rm d}\xi\\
		&=
		(4 \pi)^{-d/2}
		\int_{\RR^d}
		\big(1 - e^{-t \psi(\xi\sqrt{\lambda} )} \big)
		e^{-\frac{\norm{\xi}^2}{4}}
		{\: \rm d}\xi.
	\end{align*}
	Therefore, using polar coordinates we obtain
	\begin{equation}
		\label{eq:4}
		\lambda \calL F_t(\lambda)
		= 
		\frac{2^{1-d}}{\Gamma (d/2)} 
		\int _0^\infty 
		\big(1-e^{-t\psi (r\sqrt{\lambda})}\big)
		e^{-\frac{r^2}{4}} r^{d-1} {\: \rm d}r. 
	\end{equation}
	We claim that for every $\epsilon > 0$ there are  $\delta > 0$ and $C=C(\delta) > 0$  such that for all $r > 0$
	and $0 < u \leq \delta$
	\begin{equation}
		\label{eq:18}
		\psi(r u) \leq C \psi(u) \big(r^2 + r^{-\epsilon}\big).
	\end{equation}
	For $\alpha > 0$, we recall that (see \cite[Theorem 1.5.3]{bgt})
	\[
		\lim_{u \to 0^+} \frac{\psi^*(u)}{\psi(u)} = 1.
	\]
	Hence, by \eqref{eq:2}, there is $C > 0$,
	\[
		\psi(r u) \leq \psi^*(r u) \leq 2 (r^2 + 1) \psi^*(u)
		\leq C (r^2 + 1) \psi(u).
	\]
	For $\alpha = 0$, by \eqref{eq:14}, there is $\delta > 0$ such that if $r u, u \leq \delta$
	then
	\[
		\psi(r u) \leq 2 \psi(u) \max\{r^\epsilon, r^{-\epsilon}\}.
	\]
	Otherwise, $r u \geq \delta$ and \eqref{eq:2} implies that
	\[
		\psi(r u) 
		\leq 
		\psi^*(1) \big(2 + \delta^{-2}\big) r^2 u^2.
	\]
	Again, by \eqref{eq:14}, for every $\epsilon > 0$ there is $C > 0$ such that
	\[
		\psi(u) \geq C u^{\epsilon},
	\]
	we obtain
	
	\[
		\psi(r u) \lesssim r^2 \psi(u) u^{2 - \epsilon}
		\lesssim r^2 \psi(u),
	\]
	proving the claim \eqref{eq:18}.
	
	Next, by \eqref{eq:18}, there is $C > 0$ such that for all $r \geq 0$ and $\lambda \leq \delta$
	\[
		\frac{1 - e^{-t \psi(r \sqrt{\lambda})}}{t \psi(\sqrt{\lambda})}
		\leq
		\frac{\psi(r \sqrt{\lambda})}{\psi(\sqrt{\lambda})}
		\leq
		C (r^2 + r^{-\epsilon}).
	\]
	Hence, by the dominated convergence theorem
	\begin{align*}
		\lim_{\atop{\lambda \to 0^+}{t \psi(\sqrt{\lambda}) \to 0}}
		\frac{1}{t \psi(\sqrt{\lambda})}
		\int_0^\infty \big(1 - e^{-t\psi(r \sqrt{\lambda})}\big) e^{-r^2/4} r^{d-1} {\: \rm d} r
		=
		2^{d+\alpha  -1}\Gamma \big((d+\alpha)/2\big),
	\end{align*}
	because for each $r > 0$
	\[
		\lim_{\atop{\lambda \to 0^+}{t \psi(\sqrt{\lambda}) \to 0}}
		\frac{1 - e^{-t \psi(r \sqrt{\lambda})}}{t \psi(r \sqrt{\lambda})}
		\cdot 
		\frac{\psi(r \sqrt{\lambda})}{\psi(\sqrt{\lambda})} = r^\alpha. \qedhere
	\]
\end{proof}

The following theorem provides the asymptotic behavior of $F_t$ at infinity. Here, we have to exclude
$\alpha = 2$ what is natural since for the Brownian motion $\psi(x)=\norm{x}^2$ and the tail decay exponentially.
\begin{theorem}
	\label{Tail}
	If $\psi \in \ROrg$ for some $\alpha \in [0, 2)$ then 
	\begin{align}
		\label{eq:11}
		\lim_{\atop{r \to 0^+}{t \psi(r) \to 0}} 
		\frac{\PP\big(\norm{X_t} \geq r^{-1}\big)}{t\psi (r)}
		=
		\mathcal{C}_{d, \alpha}
	\end{align}
	where
	\[
		\mathcal{C}_{d, \alpha} = 
		2^{\alpha} 
		\frac{\Gamma\big((d+\alpha)/2\big)}{\Gamma(d/2) \Gamma(1-\alpha/2)}.
	\]
\end{theorem}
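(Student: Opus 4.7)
The strategy is to upgrade the Laplace-transform asymptotic of Lemma~\ref{lem:1} into an asymptotic for $F_t$ itself, while carefully preserving the joint regime $r \to 0^+$, $t\psi(r) \to 0$; a classical Karamata Tauberian theorem applied to each fixed $t$ would destroy this uniformity. Let $\mu_t$ be the non-negative Borel measure on $[0, \infty)$ with tail $\mu_t([s, \infty)) = F_t(s)$. Integration by parts gives $\lambda \calL F_t(\lambda) = \int_0^\infty (1 - e^{-\lambda s}) {\: \rm d}\mu_t(s)$, so Lemma~\ref{lem:1} reads
\[
	\int_0^\infty \bigl(1 - e^{-\lambda s}\bigr) {\: \rm d}\mu_t(s)
	\sim
	2^\alpha \frac{\Gamma((d+\alpha)/2)}{\Gamma(d/2)} t\psi(\sqrt\lambda).
\]
For $r > 0$, let $\nu_{t, r}$ denote the pushforward of $(t\psi(r))^{-1} \mu_t$ under the dilation $s \mapsto r^2 s$. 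Then $\nu_{t, r}([1, \infty)) = F_t(r^{-2})/(t\psi(r))$, so the target becomes $\nu_{t, r}([1, \infty)) \to \mathcal{C}_{d, \alpha}$.

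Fix $a > 0$ and apply the displayed asymptotic with $\lambda$ replaced by $a r^2$. Since $\psi \in \ROrg$ of index $\alpha$, the condition $t\psi(\sqrt{a r^2}) \to 0$ is equivalent to $t\psi(r) \to 0$, and $\psi(r\sqrt a)/\psi(r) \to a^{\alpha/2}$. Dividing by $t\psi(r)$ and changing variables $s \mapsto r^2 s$ yields, for every $a > 0$,
\[
	\int_0^\infty \bigl(1 - e^{-as}\bigr) {\: \rm d}\nu_{t, r}(s)
	\longrightarrow
	a^{\alpha/2} \cdot 2^\alpha \frac{\Gamma((d+\alpha)/2)}{\Gamma(d/2)}.
\]
For $\alpha \in (0, 2)$, the right-hand side equals $\int_0^\infty (1 - e^{-as}) {\: \rm d}\nu_\infty(s)$ where $\nu_\infty$ has density $\mathcal{C}_{d, \alpha} (\alpha/2) s^{-\alpha/2 - 1}$ on $(0, \infty)$; a direct computation using $\int_0^\infty (1 - e^{-u}) u^{-\alpha/2 - 1} {\: \rm d}u = \Gamma(1-\alpha/2)/(\alpha/2)$ confirms both the identity and $\nu_\infty([1, \infty)) = \mathcal{C}_{d, \alpha}$. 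The edge case $\alpha = 0$ fits the same scheme with $\nu_\infty$ interpreted as the unit point mass at $+\infty$, giving $\mathcal{C}_{d, 0} = 1$.

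It remains to transfer the convergence along the dense family $\{1 - e^{-as} : a > 0\}$ to convergence of $\nu_{t, r}([1, \infty))$. By vague compactness, along any sequence $(t_n, r_n)$ in the regime the measures $\nu_{t_n, r_n}$ admit a subsequence converging vaguely on $(0, \infty)$; the convergence above combined with injectivity of the Bernstein transform identifies the limit as $\nu_\infty$, so $\nu_{t, r} \to \nu_\infty$ vaguely. To handle the non-compact set $[1, \infty)$ we exploit the monotonicity of $s \mapsto \nu_{t, r}([s, \infty))$ inherited from $F_t$, together with the weak-scaling inequalities \eqref{eq:2} and \eqref{eq:14} for $\psi$, to produce uniform tail bounds that prevent loss of mass at infinity. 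Since $\nu_\infty(\{1\}) = 0$, this gives $\nu_{t, r}([1, \infty)) \to \nu_\infty([1, \infty)) = \mathcal{C}_{d, \alpha}$. The principal difficulty is precisely this last step: standard Tauberian theorems do not preserve uniformity in the time variable, so Karamata's theorem applied to individual $F_t$ is insufficient, and the equicontinuity/tightness argument on the family $\{\nu_{t, r}\}$, in the spirit of \cite{bct}, is the device that supplies the required uniformity.
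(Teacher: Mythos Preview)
Your measure-theoretic route via rescaled measures $\nu_{t,r}$ and vague compactness is genuinely different from the paper's argument, which works instead with the primitive $\calF_t(x)=\int_0^x F_t$, introduces tempered distributions $\Lambda_{t,r}\in\calS'([0,\infty))$, proves \emph{equicontinuity} of the family $(\Lambda_{t,r})$ using the uniform bound \eqref{eq:18}, extends convergence from exponentials to all test functions by density, and finally reads off the asymptotic of $\calF_t$ and then of $F_t$ via smooth cutoffs and a monotone density step.

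There is, however, a real gap in your last paragraph. You correctly identify that upgrading Bernstein-transform convergence to convergence of $\nu_{t,r}([1,\infty))$ is the crux, but the justification you give is incomplete on two counts. First, the identification of the vague subsequential limit already requires control at infinity: $1-e^{-as}$ does not lie in $C_c((0,\infty))$, so vague convergence alone does not pass to Bernstein transforms, and hence injectivity of the latter cannot be invoked without the very tail bound you still owe. Second, the inequalities \eqref{eq:2} and \eqref{eq:14} are properties of $\psi$, not of $F_t$; by themselves they do not produce the needed uniform estimate $\sup_{t,r}\nu_{t,r}([M,\infty))\to 0$ as $M\to\infty$. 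What actually works for $\alpha\in(0,2)$ is to go back to the integral representation \eqref{eq:4} and use \eqref{eq:18} to get $\int(1-e^{-as})\,{\rm d}\nu_{t,r}(s)\le C\,a^{(\alpha-\epsilon)/2}$ uniformly in $(t,r)$ for small $a$, which then yields $\nu_{t,r}([M,\infty))\lesssim M^{-(\alpha-\epsilon)/2}$; alternatively one may quote a Pruitt-type bound $\PP(|X_t|\ge R)\lesssim t\,\psi^*(R^{-1})$. This is exactly the work encoded in the paper's equicontinuity step, and it should be written out.

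For $\alpha=0$ your sketch breaks down. The would-be limit ``unit mass at $+\infty$'' is not a Radon measure on $(0,\infty)$; the vague limit of $\nu_{t,r}$ on $(0,\infty)$ is the zero measure, from which $\nu_{t,r}([1,\infty))\to 1$ cannot be recovered. Moreover the tail bound above degenerates ($M^{-(\alpha-\epsilon)/2}$ is useless when $\alpha=0$), so mass \emph{does} escape to infinity and a different argument is required. The paper's distributional framework handles $\alpha=0$ seamlessly because the limiting density $x^{1-\alpha/2}$ remains perfectly good at $\alpha=0$, and the smooth cutoff step does not rely on any power decay of tails. If you wish to keep your approach, you will need a separate treatment of the endpoint $\alpha=0$.
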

\begin{proof}
	Let us define
	\[
		\calF_t(x)=\int _0^x F_t(r) {\: \rm d} r, \quad x \geq 0.
	\]
	Then
	\[
		\calF_t(x)
		\leq e \int_0^x e^{-r/x} F_t(r) {\: \rm d} r
		\leq e  \calL  F_t(x^{-1}\big).
	\]
	Hence, by \eqref{eq:4}
	\begin{equation}
		\label{eq:20}
		\calF_t(x)
		\lesssim
		x t
		\int_0^\infty 
		\psi\big(s x^{-1/2}\big)
		e^{-\frac{s^2}{4}} s^{d-1} {\: \rm d}s.
	\end{equation}
	Let $\delta > 0$. For $(t,r) \in (0, +\infty) \times (0, \delta)$ we define a tempered distribution
	$\Lambda _{t,r}\in \calS^\prime\big([0, +\infty)\big)$ by setting
	\begin{align*}
		\Lambda _{t,r}(f)
		=
		\frac{r}{t \psi\big(r^{1/2}\big)}
		\int _0^\infty 
		f(x) \calF_t\big(r^{-1} x\big) {\: \rm d}x, \quad f \in \mathcal{S}\big([0, \infty)\big).
	\end{align*}
	Recall that the space $\calS\big([0, +\infty)\big)$ consists of Schwartz functions on $\RR$ restricted to
	$[0, +\infty)$ and $\calS^\prime \big([0, +\infty)\big)$ consists of tempered distributions supported by
	$[0, +\infty)$, see \cite{vlad} for details.
	
	We claim that the family $\big(\Lambda _{t,r}: t > 0, r \in (0, \delta)\big)$ is equicontinuous. Indeed,
	by \eqref{eq:20} and \eqref{eq:18}, for each $\epsilon > 0$ there is $\delta > 0$ such that
	\begin{align*}
		\Lambda_{t,r}(f)
		& \lesssim
		\int_0^\infty
		|f(x)|
		x
		\int_0^\infty
		\frac{\psi\big(s r^{1/2} x^{-1/2}\big)}{\psi\big(r^{1/2}\big)} e^{-\frac{s^2}{4}} s^{d-1} {\: \rm d} s
		{\: \rm d} x \\
		& \lesssim
		\int_0^\infty
		|f(x)|
		\int_0^\infty
		\big(s^2 + s^{-\epsilon} x^{1-\epsilon/2}\big) e^{-\frac{s^2}{4}} s^{d-1} {\: \rm d}s
		{\: \rm d}x,
	\end{align*}
	for all $t > 0$ and $r \in (0, \delta)$. Hence,
	\[
		\Lambda_{t, r}(f) \lesssim \sup_{x \geq 0} \big| (1+x)^3 f(x) \big|.
	\]
	Next, for any $\tau > 0$ we set $f_\tau (x)=e^{-\tau x}$. Then
	\[
		\Lambda _{t,r}(f_\tau ) 
		= 
		\frac{1}{t\tau^2 \psi\big(r^{1/2}\big)} \tau r\calL F_t (\tau r)
		=
		\frac{\psi\big(r^{1/2} \tau^{1/2}\big)}{\tau^2 \psi \big(r^{1/2}\big)}
		\cdot
		\frac{\tau r\calL F_t(\tau r)}{t \psi\big(\tau ^{1/2} r^{1/2}\big)}.
	\]
	In particular, by Lemma \ref{lem:1} we obtain
	\begin{align*}
		\lim_{\atop{r \to 0^+}{t \psi(\sqrt{r}) \to 0}}
		\Lambda _{t,r}(f_\tau )
		=C'_{d,\alpha}\tau ^{\alpha /2-2}
		=
		\frac{C'_{d,\alpha}}{\Gamma (2-\alpha /2)}\int _0^\infty e^{-\tau x}x^{1-\alpha /2} {\: \rm d}x
	\end{align*}
	where 
	\[
		C'_{d,\alpha }
		=
		2^\alpha 
		\frac{\Gamma \big((d+\alpha)/2\big)}{\Gamma (d/2)}.
	\]
	Since $\calB$, the linear span of the set $\big\{f_\tau : \tau > 0\big\}$, is dense in $\calS\big([0, +\infty)\big)$
	and the family $\big(\Lambda _{t,r}: t > 0, r \in (0, \delta)\big)$ is equicontinuous on
	$\calS\big([0, +\infty)\big)$, we conclude that for any $f \in \calS\big([0, +\infty)\big)$,
	\begin{align*}
		\lim_{\atop{r \to 0^+}{t \psi(\sqrt{r}) \to 0}}
		\Lambda_{t,r}(f)
		=
		\frac{C'_{d,\alpha}}{\Gamma (2-\alpha /2)}
		\int _0^\infty f(x) x^{1-\alpha /2} {\: \rm d}x.
	\end{align*}
	For a completeness of the argument we provide a sketch of the proof that $\calB$ is dense in
	$\calS\big([0, +\infty)\big)$, for details we refer to \cite{vlad} and \cite{vdz}. By the Hahn--Banach theorem
	it is enough to show that if $\Lambda \in \calS^\prime\big([0, +\infty)\big)$ and $\Lambda (\phi)=0$ for all
	$\phi \in \calB$ then $\Lambda$ is the zero functional. Assume that $\Lambda $ vanishes on $\calB$ and let
	$\calL \Lambda (z)=\Lambda (e^{-z\cdot})$, $\mathrm{Re}\, z>0$, be the Laplace transform of $\Lambda$. Since
	$\Lambda =0$ on $\calB$ we get $\calL \Lambda (\lambda )=0$ for $\lambda >0$. But the Laplace transform is analytic
	in the half-plane $\mathrm{Re}\, z>0$, whence $\calL \Lambda (z)=0$, for $\mathrm{Re}\,z>0$. Using the connection
	between Laplace and Fourier transforms,
	\begin{align*}
		\widehat{\Lambda}(\xi)
		=
		\lim_{\lambda \to 0^+}\calL \Lambda (\lambda +i\xi),\quad \mathrm{in}\ \calS^\prime\big([0, +\infty)\big),
	\end{align*}
	we obtain that $\widehat{\Lambda}=0$. Hence, $\Lambda $ is the zero functional as desired. 
	
	Now, we claim that 
	\begin{equation}
		\label{eq:10}
		\lim_{\atop{r \to 0^+}{t \psi(\sqrt{r}) \to 0}}
		\frac{r \calF_t\big(r^{-1}\big)}{t \psi(\sqrt{r})} 
		= \frac{C'_{d,\alpha}}{\Gamma(2-\alpha/2)}.
	\end{equation}
	For a given $\epsilon > 0$ choose $\phi_+ \in \calS\big([0, +\infty)\big)$ such that 
	$$
	\phi_+(x) =
	\begin{cases}
		1 & \text{ for } 0 \leq x \leq 1, \\
		0 & \text{ for } 1+\varepsilon \leq x.
	\end{cases}
	$$
	We have
	\begin{align*}
		\frac{r \calF_t\big(r^{-1}\big)}{t \psi(\sqrt{r})}
		= 
		\frac{r}{ t \psi(\sqrt{r})} \int_0^{1/r} {\: \rm d} \calF_t(s) 
		&\leq 
		\frac{r}{ t \psi(\sqrt{r})} \int_0^{1/r} \phi_+(s/r) {\: \rm d}\calF_t(s)\\
		&\leq
		\frac{r}{ t \psi(\sqrt{r})} \int_0^{\infty} \phi_+(s/r) {\: \rm d}\calF_t(s),
	\end{align*}
	thus
	$$
	\frac{r \calF_t\big(r^{-1}\big)}{t \psi(\sqrt{r})}
	\leq 
	- \Lambda_{t, r}(\phi_+').
	$$
	Hence,
	\begin{align*}
		\limsup_{\atop{r \to 0^+}{t \psi(\sqrt{r}) \to 0}} 
		\frac{r \calF_t\big(r^{-1}\big)}{t \psi(\sqrt{r})}
		&\leq 
		\frac{-C'_{d,\alpha}}{\Gamma(2-\alpha/2)} 
		\int_0^{\infty} s^{1 - \alpha/2} \phi'_+(s) {\: \rm d}s\\
		&=\frac{C'_{d,\alpha}}{\Gamma(1-\alpha/2)}
		\int_0^\infty 
		s^{ - \alpha/2} \phi_+(s) {\: \rm d}s \\
		& \leq
		\frac{C'_{d,\alpha}}{\Gamma(2 - \alpha/2)} (1+\varepsilon)^{1-\alpha/2}.
	\end{align*}
	Similarly, taking $\phi_- \in \calS\big([0, +\infty)\big)$ such that
	$$
	\phi_-(x) =
	\begin{cases}
		1 & \text{ for } 0 \leq x \leq 1-\varepsilon, \\
		0 & \text{ for } 1 \leq x,
	\end{cases}
	$$
	we may show that
	$$
	\liminf_{\atop{r \to 0^+}{t \psi(\sqrt{r}) \to 0}}
	\frac{r \calF_t\big(r^{-1}\big)}{t\psi(\sqrt{r})}
	\geq 
	\frac{C'_{d,\alpha}}{\Gamma(2 - \alpha/2)} (1 - \varepsilon)^{1 - \alpha/2}.
	$$
	This proves \eqref{eq:10}.

	To show \eqref{eq:11} we adapt the proof of the monotone density theorem (see e.g. \cite[Theorem 1.7.2]{bgt}).
	Let $\epsilon > 0$. The function $F_t(s)$ is non-increasing therefore
	\begin{equation}
		\label{eq:12}
		\calF_t\big(r^{-1}\big) - \calF_t\big((1-\epsilon) r^{-1}\big)
		=
		\int_{(1-\epsilon)/r}^{1/r} F_t(s) {\: \rm d} s
		\geq \epsilon r^{-1} F_t\big(r^{-1}\big),
	\end{equation}
	and
	\begin{align}
		\label{eq:12a}
		\calF_t\big((1+\epsilon)r^{-1}\big) - \calF_t\big(r^{-1}\big)
		= \int^{(1+\epsilon)/r}_{1/r} F_t(s) {\: \rm d}s 
		\leq \epsilon r^{-1} F_t\big(r^{-1}\big).
	\end{align}
	By \eqref{eq:10},
	$$
	\lim_{\atop{r \to 0^+}{t \psi(\sqrt{r}) \to 0}}
	\frac{r \calF_t \big((1-\epsilon) r^{-1}\big)}{t \psi(\sqrt{r})} 
	=\frac{C'_{d,\alpha}(1-\epsilon)^{1-\alpha/2}}{\Gamma(2 - \alpha/2)}.
	$$
	Hence, \eqref{eq:12} implies
	$$
	\limsup_{\atop{r \to 0^+}{t \psi(\sqrt{r}) \to 0}}
	\frac{F_t\big(r^{-1}\big)}{t\psi(\sqrt{r})}
	\leq
	\frac{C'_{d,\alpha}}{\Gamma(2 - \alpha/2)}
	\cdot
	\frac{1 - (1-\epsilon)^{1-\alpha/2}}{\epsilon }.
	$$
	Similarly, by \eqref{eq:12a} we get
	$$
	\liminf_{\atop{r \to 0^+}{t \psi(\sqrt{r}) \to 0}}
	\frac{F_t\big(r^{-1}\big)}{t \psi(\sqrt{r})}
	\geq 
	\frac{C'_{d,\alpha}}{\Gamma(2 - \alpha/2)}
	\cdot
	\frac{(1+\epsilon)^{1-\alpha/2} -1}{\epsilon }.
	$$
	Finally, by taking $\epsilon$ tending to zero we obtain \eqref{eq:11}.
\end{proof}

By the same line of reasoning as in proofs of Lemma \ref{lem:1} and Theorem \ref{Tail}
we may show the corresponding results if the L\'{e}vy--Khintchine exponent $\psi$ varies regularly at infinity.
\begin{lemma}
	\label{lem:2}
	If $\psi \in \RInf$ for $\alpha \in [0, 2]$ then
	\[
		\lim_{\atop{\lambda \to +\infty}{t \psi(\sqrt{\lambda}) \to 0}}
		\frac{ \lambda \calL F_t (\lambda )}{t\psi (\sqrt{\lambda})} =  
		2^{\alpha}
		\frac{\Gamma\big((d+\alpha)/2\big)}{\Gamma(d/2)}.
	\]
\end{lemma}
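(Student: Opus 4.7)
The plan is to follow the proof of Lemma \ref{lem:1} verbatim, merely replacing the regime $\lambda \to 0^+$ with $\lambda \to +\infty$ and, correspondingly, the Potter--type bounds on $\psi$ near zero with Potter--type bounds near infinity. Nothing in the initial reduction depends on which neighborhood $\psi$ is regular in: Fubini--Tonelli applied to \eqref{eq:25}, together with the characteristic function \eqref{eq:9} and polar coordinates, gives
\begin{equation*}
	\lambda \calL F_t(\lambda)
	=
	\frac{2^{1-d}}{\Gamma(d/2)} \int_0^\infty \bigl(1 - e^{-t\psi(r\sqrt{\lambda})}\bigr) e^{-r^2/4} r^{d-1} {\: \rm d} r,
\end{equation*}
exactly as in \eqref{eq:4}. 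Dividing through by $t \psi(\sqrt{\lambda})$, the pointwise limit of the integrand in the regime $\lambda \to +\infty$, $t \psi(\sqrt{\lambda}) \to 0$ is $r^\alpha$: for each $r>0$ we have $r\sqrt{\lambda} \to +\infty$, so $\psi(r\sqrt{\lambda})/\psi(\sqrt{\lambda}) \to r^\alpha$ by regular variation at infinity, while $t \psi(r\sqrt{\lambda}) = t\psi(\sqrt{\lambda})\cdot \psi(r\sqrt{\lambda})/\psi(\sqrt{\lambda}) \to 0$.

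Next I would establish the analog of the domination \eqref{eq:18} for $\psi \in \RInf$: for every $\epsilon > 0$ there exist $C > 0$ and $u_0 > 0$ such that for all $u \geq u_0$ and $r > 0$
\begin{equation*}
	\psi(ru) \leq C \psi(u) \bigl(r^2 + r^{-\epsilon}\bigr).
\end{equation*}
For $\alpha > 0$, the case $r \geq 1$ follows from \eqref{eq:2} together with $\psi^*(u)/\psi(u) \to 1$ as $u \to +\infty$ (\cite[Theorem 1.5.3]{bgt}), giving $\psi(ru) \leq \psi^*(ru) \leq 2(r^2+1)\psi^*(u) \lesssim (r^2+1)\psi(u)$. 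For $r < 1$, Potter's theorem (\cite[Theorem 1.5.6]{bgt}) applied to $\psi$ at infinity yields $\psi(ru)/\psi(u) \lesssim r^{\alpha - \epsilon} \leq r^{-\epsilon}$ whenever $ru \geq u_0$, while the residual range $ru < u_0 \leq u$ is controlled by $\psi(ru) \leq \psi^*(u_0) < \infty$ and the lower bound $\psi(u) \gtrsim u^{\alpha - \epsilon}$ (itself a Potter bound). The case $\alpha = 0$ is handled analogously, replacing Potter's theorem by the slowly-varying estimate directly and absorbing the polynomial growth $r^{-\epsilon}$ into the envelope.

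With this bound in place, $\frac{1-e^{-t\psi(r\sqrt{\lambda})}}{t\psi(\sqrt{\lambda})} \leq \frac{\psi(r\sqrt{\lambda})}{\psi(\sqrt{\lambda})} \leq C(r^2+r^{-\epsilon})$, so choosing $\epsilon < d$ produces an integrable majorant against $e^{-r^2/4} r^{d-1}$. The dominated convergence theorem then gives
\begin{equation*}
	\lim_{\atop{\lambda \to +\infty}{t\psi(\sqrt{\lambda}) \to 0}}
	\frac{\lambda \calL F_t(\lambda)}{t \psi(\sqrt{\lambda})}
	=
	\frac{2^{1-d}}{\Gamma(d/2)} \int_0^\infty r^{d+\alpha - 1} e^{-r^2/4} {\: \rm d} r
	=
	2^{\alpha} \frac{\Gamma\bigl((d+\alpha)/2\bigr)}{\Gamma(d/2)},
\end{equation*}
using the standard Gaussian moment $\int_0^\infty r^{d+\alpha-1} e^{-r^2/4} {\: \rm d}r = 2^{d+\alpha-1}\Gamma((d+\alpha)/2)$. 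The main technical obstacle is the Potter-type bound above: one must control $\psi(ru)/\psi(u)$ uniformly in $r \in (0, +\infty)$ while $u$ is restricted to a neighborhood of infinity, which requires patching together Potter's inequality on the range where $ru$ is also large with a separate argument on the range where $ru$ stays bounded while $u$ grows.
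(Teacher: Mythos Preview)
Your proposal is correct and matches the paper's intended approach exactly: the paper does not prove Lemma~\ref{lem:2} separately but simply states that it follows ``by the same line of reasoning'' as Lemma~\ref{lem:1}, and your argument carries out precisely that adaptation. Your treatment of the Potter--type bound at infinity (splitting into the ranges $ru \geq u_0$ and $ru < u_0 \leq u$, with the latter handled via $\psi(ru) \leq \psi^*(u_0)$ and a Potter lower bound on $\psi(u)$) is in fact more detailed than what the paper provides.
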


\begin{theorem}
	\label{Tail2}
	If $\psi \in \RInf$ for some $\alpha \in [0, 2)$ then
	\[
		\lim_{\atop{r \to +\infty}{t \psi(r) \to 0}}
		\frac{\PP\big(\norm{X_t} \geq r^{-1}\big)}{t\psi (r)}
		=
		\mathcal{C}_{d, \alpha}.
	\]
\end{theorem}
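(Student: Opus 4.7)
The plan is to mirror the proof of Theorem~\ref{Tail} line by line, with the roles of $0$ and $+\infty$ interchanged and Lemma~\ref{lem:2} replacing Lemma~\ref{lem:1}. The auxiliary function $\calF_t(x)=\int_0^x F_t(r){\: \rm d}r$, the bound $\calF_t(x)\leq e\calL F_t(x^{-1})$, the Laplace identity \eqref{eq:4}, and the resulting inequality \eqref{eq:20} are all symmetric in the two regimes and carry over unchanged.

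The only point at which the argument genuinely differs is the domination bound used to establish equicontinuity. I need the analog of \eqref{eq:18} at infinity: for every $\epsilon>0$ there exist $\delta,C>0$ such that
\[
    \psi(ru)\leq C\psi(u)\bigl(r^2+r^{-\epsilon}\bigr),\qquad r>0,\ u\geq 1/\delta.
\]
For $\alpha>0$, since $\psi\in\RInf$ forces $\psi^*(u)/\psi(u)\to 1$ as $u\to+\infty$ (the infinity version of \cite[Theorem~1.5.3]{bgt}), the bound \eqref{eq:2} gives $\psi(ru)\leq 2(r^2+1)\psi^*(u)\leq C(r^2+1)\psi(u)$ for $u$ large. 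For $\alpha=0$ the Potter-type bound at infinity handles the subcase $ru\geq 1/\delta$, producing $\psi(ru)/\psi(u)\leq Cr^{-\epsilon}$ when $r\leq 1$; in the complementary subcase $ru<1/\delta$ I use that $\psi(ru)$ stays bounded while $\psi(u)$ is bounded below on $[1/\delta,+\infty)$, and the smallness of $r$ makes $r^{-\epsilon}$ absorb the resulting constant.

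With this estimate at hand, for $r>1/\delta$ I define the tempered distribution
\[
    \Lambda_{t,r}(f)=\frac{r}{t\psi(r^{1/2})}\int_0^\infty f(x)\,\calF_t(r^{-1}x){\: \rm d}x,
\]
and the same computation as in the proof of Theorem~\ref{Tail} yields $|\Lambda_{t,r}(f)|\lesssim\sup_{x\geq 0}|(1+x)^3 f(x)|$, hence equicontinuity of the family. Testing on $f_\tau(x)=e^{-\tau x}$ and applying Lemma~\ref{lem:2}---with the auxiliary ratio $\psi(r^{1/2}\tau^{1/2})/\psi(r^{1/2})\to\tau^{\alpha/2}$ coming from regular variation at infinity---gives $\Lambda_{t,r}(f_\tau)\to C'_{d,\alpha}\tau^{\alpha/2-2}$. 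The Hahn--Banach density of the linear span of $\{f_\tau:\tau>0\}$ in $\calS([0,+\infty))$, the sandwich argument with Schwartz cutoffs $\phi_\pm$, and the monotone-density step transcribe without change and yield
\[
    \lim_{\atop{r\to+\infty}{t\psi(\sqrt{r})\to 0}}\frac{F_t(r^{-1})}{t\psi(\sqrt{r})}=\mathcal{C}_{d,\alpha},
\]
which after the substitution $\rho=\sqrt{r}$ is precisely the claimed limit.

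The hard part is confined to the domination bound in the second paragraph; once it is in place, the entire distributional-limit, density, and monotone-density machinery is agnostic to the direction of the regime.
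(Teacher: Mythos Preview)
Your approach is precisely what the paper intends—it merely states that Lemma~\ref{lem:2} and Theorem~\ref{Tail2} follow ``by the same line of reasoning'' as Lemma~\ref{lem:1} and Theorem~\ref{Tail}, and you have correctly identified the domination bound \eqref{eq:18} as the only place requiring real adjustment. One minor caveat: in the $\alpha=0$ subcase with $ru<1/\delta$, a slowly varying $\psi$ need not be bounded below on $[1/\delta,+\infty)$; the clean fix, mirroring the paper's own argument for \eqref{eq:18}, is to use the Potter bound $\psi(u)\gtrsim u^{-\epsilon/2}$ for $u\geq 1/\delta$ and combine it with $r^{-\epsilon}\geq (u\delta)^{\epsilon}$ to get $\psi(ru)\lesssim \psi(u)r^{-\epsilon}$.
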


\section{Asymptotic behavior of the densities}
Suppose that $\mathbf X=(X_t : t\geq 0)$ is an isotropic \emph{unimodal} L\'{e}vy process in $\RR ^d$,
i.e. a process having a rotationally invariant and radially non-increasing density function $p(t, \:\cdot\:)$
on $\RR ^d\setminus \{0\}$. 

\subsection{Transition density asymptotic}
In the following theorem we give the asymptotic behavior of the transition density $p(t, x)$.
\begin{theorem}
	\label{densityAsymp}
	If $\psi \in \ROrg$, for some $\alpha \in (0, 2)$, then 
	\begin{align*}
		\lim_{\atop{\norm{x} \to +\infty}{t \psi(\norm{x}^{-1}) \to 0}}
		\frac{p(t, x)}{\norm{x}^{-d} t\psi \big(\norm{x}^{-1}\big)}
		=
		\mathcal{A}_{d, \alpha}
	\end{align*}
		where
	\begin{align*}
		\mathcal{A}_{d, \alpha}
		=
		\alpha 2^{\alpha -1}\pi ^{-d/2 -1}
		\sin\Big(\frac{\alpha \pi}{2}\Big)
		\Gamma\Big(\frac{\alpha}{2}\Big)
		\Gamma\bigg(\frac{d+\alpha}{2}\bigg).
	\end{align*}
\end{theorem}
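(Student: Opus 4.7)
The plan is to reduce the density asymptotic to the tail asymptotic of Theorem~\ref{Tail} by exploiting radial monotonicity. Writing $\tilde p(t,\rho)$ for the common value of $p(t,x)$ on the sphere of radius $\rho$ and $g_t(R)=\PP(\norm{X_t}\geq R)$, polar coordinates give
\[
    g_t(R_1)-g_t(R_2) = \omega_{d-1}\int_{R_1}^{R_2}\tilde p(t,\rho)\rho^{d-1}\,{\rm d}\rho,
    \qquad 0<R_1<R_2,
\]
where $\omega_{d-1}=2\pi^{d/2}/\Gamma(d/2)$ is the surface area of the unit sphere. Since $\mathbf X$ is isotropic unimodal, $\tilde p(t,\cdot)$ is non-increasing, which is exactly the structural hypothesis needed to invert tail information into pointwise information on the radial density.

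The main step will be a two-sided sandwich: for $\epsilon\in(0,1)$ and $R>0$, using $\tilde p(t,\rho)\geq \tilde p(t,R)$ on $[(1-\epsilon)R,R]$ and $\tilde p(t,\rho)\leq \tilde p(t,R)$ on $[R,(1+\epsilon)R]$, I obtain
\[
    \frac{d\,[g_t(R)-g_t((1+\epsilon)R)]}{\omega_{d-1}R^{d}[(1+\epsilon)^d-1]}
    \leq \tilde p(t,R) \leq
    \frac{d\,[g_t((1-\epsilon)R)-g_t(R)]}{\omega_{d-1}R^{d}[1-(1-\epsilon)^d]}.
\]
Divide by $t\psi(R^{-1})R^{-d}$. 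Theorem~\ref{Tail} applied along any sequence $(t_n,R_n)$ with $R_n\to\infty$ and $t_n\psi(R_n^{-1})\to 0$ shows that $g_t(\lambda R)/[t\psi(R^{-1})]\to \mathcal{C}_{d,\alpha}\lambda^{-\alpha}$ for every $\lambda>0$ (the required condition $t\psi((\lambda R)^{-1})\to 0$ follows from $\psi\in\ROrg$, since $\psi((\lambda R)^{-1})\sim\lambda^{-\alpha}\psi(R^{-1})$). Consequently
\[
    \frac{\alpha\,\mathcal{C}_{d,\alpha}}{\omega_{d-1}}\cdot\frac{d\,[1-(1+\epsilon)^{-\alpha}]}{\alpha\,[(1+\epsilon)^d-1]}
    \leq \liminf \frac{\tilde p(t,R)}{t\psi(R^{-1})R^{-d}}
    \leq \limsup \frac{\tilde p(t,R)}{t\psi(R^{-1})R^{-d}}
    \leq \frac{\alpha\,\mathcal{C}_{d,\alpha}}{\omega_{d-1}}\cdot\frac{d\,[(1-\epsilon)^{-\alpha}-1]}{\alpha\,[1-(1-\epsilon)^d]},
\]
and letting $\epsilon\to 0^+$ both bounds tend to $\alpha\,\mathcal{C}_{d,\alpha}/\omega_{d-1}$.

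It remains to identify the resulting constant. Substituting $\omega_{d-1}=2\pi^{d/2}/\Gamma(d/2)$ and the explicit value of $\mathcal{C}_{d,\alpha}$ from Theorem~\ref{Tail},
\[
    \frac{\alpha\,\mathcal{C}_{d,\alpha}}{\omega_{d-1}}
    = \frac{\alpha\,2^{\alpha-1}\,\Gamma((d+\alpha)/2)}{\pi^{d/2}\,\Gamma(1-\alpha/2)},
\]
and the Euler reflection formula $\Gamma(\alpha/2)\Gamma(1-\alpha/2)=\pi/\sin(\alpha\pi/2)$ converts this to exactly $\mathcal{A}_{d,\alpha}$ as defined in the statement.

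The chief obstacle is the uniformity in the joint limit: Theorem~\ref{Tail} is a two-parameter convergence statement, and one has to verify that along any legitimate sequence $(R_n,t_n)$ the shifted arguments $(\lambda R_n,t_n)$ also obey the constraints of the tail theorem. This is automatic from $\psi\in\ROrg$, but it is what legitimizes the monotone-density style pinching; apart from this, the argument is just a radially-adapted version of the classical Karamata-type monotone density theorem, which is why it is essential that $\mathbf X$ be unimodal (not merely isotropic).
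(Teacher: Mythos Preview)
Your proof is correct and follows essentially the same approach as the paper's: both pass from the tail asymptotic of Theorem~\ref{Tail} to the pointwise density asymptotic via the monotone density argument (adapted from \cite[Theorem 1.7.2]{bgt}), using radial monotonicity of $p(t,\cdot)$ to sandwich the density between differences of tails on shrinking shells, and then identifying the constant through Euler's reflection formula. The only cosmetic difference is that the paper parametrizes by $F_t(r)=\PP(|X_t|\geq\sqrt{r})$ and works with shells $[\sqrt{a/r},\sqrt{b/r}]$, whereas you work directly with $g_t(R)=\PP(|X_t|\geq R)$ and shells $[(1-\epsilon)R,(1+\epsilon)R]$; this is a change of variables with no substantive effect.
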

\begin{proof}
	In the proof we adapt the argument from \cite[Theorem 1.7.2]{bgt}. For any $0<a<b$, we have
	\[
		F_t(a/r)-F_t(b/r)
		=
		c_d\int_{\sqrt{a/r}}^{\sqrt{b/r}} u^{d-1} p(t, u) {\: \rm d}u,
	\]
	where 
	\[
		c_d=2 \frac{\pi^{d/2}} {\Gamma (d/2)}.
	\]
	Since the function $u \mapsto p(t, u)$ is nonincreasing, we get
	\begin{equation}
		\label{eq:19}
				\frac{F_t(a/r)-F_t(b/r)} {t\psi (\sqrt{r})}  \geq
		\frac{c_d}{d}
		\cdot \frac{p(t, \sqrt{b/r})} {t\psi (\sqrt{r})} 
		\cdot \frac{b^{d/2}-a^{d/2}}{r^{d/2}}, \end{equation}
and \begin{equation}\label{eq:19a}
		\frac{F_t(a/r)-F_t(b/r)} {t\psi (\sqrt{r})}  
		 \leq 
		\frac{c_d}{d}
		\cdot \frac{p(t, \sqrt{a/r})} {t\psi (\sqrt{r})} 
		\cdot \frac{b^{d/2}-a^{d/2}}{r^{d/2}}
		.
		\end{equation}
	By Theorem \ref{Tail}, we have
	\begin{align*}
		\lim_{\atop{r \to 0^+}{t \psi(\sqrt{r}) \to 0}}
		\frac{F_t(a/r)-F_t(b/r)}{t \psi (\sqrt{r})}
		=
		\mathcal{C}_{d,\alpha } \big(a^{-\alpha/2}-b^{-\alpha/2}\big).
	\end{align*}
	Hence,  \eqref{eq:19} gives
	\begin{align*}
		\limsup_{\atop{r \to 0^+}{t \psi(\sqrt{r}) \to 0}}
		\frac{p(t, \sqrt{b/r})} {r^{d/2} t \psi (\sqrt{r})}
		\leq
		d  
		\cdot \frac{\mathcal{C}_{d, \alpha}}{c_d}
		\cdot \frac{a^{-\alpha /2}-b^{-\alpha /2}}{b^{d/2}-a^{d/2}}.
	\end{align*}
	Taking $b=1$, $a=1-\epsilon $ and letting $\epsilon$ to zero we obtain
	\begin{align*}
		\lim_{\atop{r \to 0^+}{t \psi(\sqrt{r}) \to 0}}
		\frac{p\big(t, r^{-1/2}\big)} {r^{d/2} t \psi (\sqrt{r})}
		\leq 
		\alpha \cdot \frac{\mathcal{C}_{d,\alpha}}{c_d}.
	\end{align*}
	Similarly, using \eqref{eq:19a}, one can show that
	\begin{align*}
		\liminf_{\atop{r \to 0^+}{t \psi(\sqrt{r}) \to 0}}
		\frac{p\big(t, r^{-1/2}\big)} {r^{d/2} t\psi (\sqrt{r})}
		\geq
		\alpha \cdot \frac{\mathcal{C}_{d, \alpha}}{c_d}.
	\end{align*}
	Finally, by the Euler's reflection formula we conclude the proof.
\end{proof}

Using Lemma \ref{lem:2} and Theorem \ref{Tail2} we obtain the following asymptotic.
\begin{theorem}
	\label{densityAsymp1}
	If $\psi \in \RInf$ for some $\alpha \in (0, 2)$ then 
	\begin{align*}
		\lim_{\atop{x \to 0}{t \psi(\norm{x}^{-1}) \to 0}}
		\frac{p(t, x)}{\norm{x}^{-d} t \psi\big(\norm{x}^{-1}\big)}
		=
		\mathcal{A}_{d, \alpha}.
	\end{align*}
\end{theorem}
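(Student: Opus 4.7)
The plan is to mirror the proof of Theorem \ref{densityAsymp} almost verbatim, swapping the ``small-$r$'' tail estimate (Theorem \ref{Tail}) for its ``large-$r$'' counterpart (Theorem \ref{Tail2}). After the substitution $r = \norm{x}^{-2}$, the regime $\norm{x} \to 0$ with $t\psi(\norm{x}^{-1}) \to 0$ becomes $r \to +\infty$ with $t\psi(\sqrt{r}) \to 0$, which is precisely the regime in which Theorem \ref{Tail2} operates.

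The starting point is the exact identity, obtained by passing to polar coordinates for the isotropic measure $p(t, \,\cdot\,)$,
\[
	F_t(a/r) - F_t(b/r) = c_d \int_{\sqrt{a/r}}^{\sqrt{b/r}} u^{d-1} p(t, u) \, {\rm d}u,
	\qquad c_d = \frac{2\pi^{d/2}}{\Gamma(d/2)},
\]
valid for any $0 < a < b$. Unimodality is essential here: since $u \mapsto p(t, u)$ is non-increasing, one obtains the two-sided sandwich
\[
	\frac{c_d}{d} \cdot \frac{b^{d/2} - a^{d/2}}{r^{d/2}} \, p\big(t, \sqrt{b/r}\,\big)
	\leq F_t(a/r) - F_t(b/r) \leq
	\frac{c_d}{d} \cdot \frac{b^{d/2} - a^{d/2}}{r^{d/2}} \, p\big(t, \sqrt{a/r}\,\big).
\]
Dividing by $t \psi(\sqrt{r})$ and applying Theorem \ref{Tail2} (in which the limit of the left-hand side is $\mathcal{C}_{d,\alpha}(a^{-\alpha/2} - b^{-\alpha/2})$) pins down $p\big(t, \sqrt{b/r}\,\big)$ and $p\big(t, \sqrt{a/r}\,\big)$ up to the geometric factor $(b^{d/2}-a^{d/2})^{-1}(a^{-\alpha/2}-b^{-\alpha/2})$.

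To extract a sharp limit, I would take $(a,b) = (1-\epsilon, 1)$ for the $\limsup$ estimate and $(a,b) = (1, 1+\epsilon)$ for the $\liminf$ estimate, and then send $\epsilon \to 0^+$. This yields
\[
	\lim_{\atop{r \to +\infty}{t \psi(\sqrt{r}) \to 0}}
	\frac{p\big(t, r^{-1/2}\big)}{r^{d/2} \, t\psi(\sqrt{r})}
	= \alpha \cdot \frac{\mathcal{C}_{d,\alpha}}{c_d}.
\]
Reverting to $x$ via $r = \norm{x}^{-2}$ and invoking Euler's reflection formula $\Gamma(\alpha/2)\Gamma(1 - \alpha/2) = \pi/\sin(\alpha\pi/2)$ rewrites the constant $\alpha \mathcal{C}_{d,\alpha}/c_d$ as exactly $\mathcal{A}_{d,\alpha}$.

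The ``hard part'' here is essentially bookkeeping rather than a new analytic obstacle: the entire substantive work has already been done in Lemma \ref{lem:2} and Theorem \ref{Tail2}, whose statements are designed so that the tail asymptotic has the \emph{same} constant $\mathcal{C}_{d,\alpha}$ as in the zero-regime case. Consequently, the proof of Theorem \ref{densityAsymp} transfers line for line, and the only verification required is that the limits $r \to +\infty$ and $t \psi(\sqrt{r}) \to 0$ remain compatible throughout the sandwich argument, which is immediate since the sandwich is uniform in the auxiliary parameters $a, b$.
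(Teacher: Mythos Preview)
Your proposal is correct and is precisely the argument the paper intends: the paper states Theorem \ref{densityAsymp1} with only the one-line justification ``Using Lemma \ref{lem:2} and Theorem \ref{Tail2} we obtain the following asymptotic,'' and your write-up spells out exactly this, mirroring the proof of Theorem \ref{densityAsymp} with Theorem \ref{Tail2} in place of Theorem \ref{Tail}. The bookkeeping (regular variation of $\psi$ at infinity to pass from $\psi(\sqrt{r/a})$ to $a^{-\alpha/2}\psi(\sqrt r)$, and the Euler reflection identity for the constant) is handled correctly.
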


Next, let us denote by $\nu(x)$ the density function of the L\'{e}vy measure $\nu$ associated to the process $\mathbf X$.
The following theorem gives the asymptotic of $\nu(x)$ as well as the equivalence of asymptotics of $p(t, x)$ and
$\nu(x)$ with regular variation of the L\'{e}vy--Khintchine exponent.
\begin{theorem}
	\label{LevyDensity}
	Let $\mathbf X = (X_t: t \geq 0)$ be an isotropic unimodal L\'{e}vy process on $\RR^d$ with the characteristic
	exponent $\psi$ and the L\'{e}vy density $\nu$. Then the following are equivalent:
	\begin{enumerate}
		\item $\psi \in \RInf$ for some $\alpha\in(0,2)$;
		\item there is $c > 0$,
		\[
			\lim_{\atop{x \to 0}{t \psi(\norm{x}^{-1}) \to 0}}
			\frac{p(t, x)}{\norm{x}^{-d} t \psi\big(\norm{x}^{-1}\big)}
			=c;
		\]
		\item there is $c > 0$,
		\[
			\lim_{x \to 0}
			\frac{\nu(x)}{\norm{x}^{-d} \psi\big(\norm{x}^{-1}\big)}=c.
		\]
	\end{enumerate}
\end{theorem}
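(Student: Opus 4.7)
The proof goes by the cyclic chain (i) $\Rightarrow$ (ii) $\Rightarrow$ (iii) $\Rightarrow$ (i).

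The implication (i) $\Rightarrow$ (ii) is exactly Theorem \ref{densityAsymp1}, which in fact furnishes the explicit value $c = \mathcal{A}_{d,\alpha}$. For (ii) $\Rightarrow$ (iii), I would invoke the standard pointwise identity
\[
	\nu(x) = \lim_{t \to 0^+} \frac{p(t,x)}{t}
\]
valid at every continuity point $x \neq 0$ of $\nu$, and hence at all but countably many radii by unimodality. Given a radial sequence $x_n \to 0$ of continuity points, one chooses $t_n$ so small that simultaneously $t_n \psi(\norm{x_n}^{-1}) < 1/n$ and $\abs{p(t_n,x_n)/t_n - \nu(x_n)} < n^{-1} \norm{x_n}^{-d} \psi(\norm{x_n}^{-1})$. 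The first inequality places $(t_n,x_n)$ into the regime where (ii) applies; dividing the second by $\norm{x_n}^{-d} \psi(\norm{x_n}^{-1})$ and using (ii) yields $\nu(x_n)/(\norm{x_n}^{-d} \psi(\norm{x_n}^{-1})) \to c$. Density of such radii together with radial monotonicity of $\nu$ promotes this to the limit as $x \to 0$, with the same constant.

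The bulk of the work lies in (iii) $\Rightarrow$ (i), via the Drasin--Shea Tauberian theorem \cite[Theorem 6.2]{shea}. Starting from the L\'evy--Khintchine representation in radial form,
\[
	\psi(r) = \sigma_{d-1} \int_0^\infty \bigl(1 - \Omega_d(rs)\bigr) \nu(s) s^{d-1} {\: \rm d} s + \eta r^2,
\]
with $\Omega_d(t) = 2^{d/2-1}\, \Gamma(d/2)\, t^{1-d/2} J_{d/2-1}(t)$, the substitution $s = u^{-1}$ and the hypothesis $\nu(u^{-1}) u^{-d-1} \sim c u^{-1} \psi(u)$ as $u \to \infty$ (a rewriting of (iii)) uncover a Mellin-convolution asymptotic
\[
	\psi(r) \sim \sigma_{d-1}\, c \int_0^\infty K(v)\, \psi(r/v) \frac{{\: \rm d} v}{v}, \quad r \to \infty,
\]
with kernel $K(v) = 1 - \Omega_d(1/v)$. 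Because $K(v) \to 1$ as $v \to 0$ and $K(v) = O(v^{-2})$ as $v \to \infty$, the Mellin transform $\tilde{K}(z)$ is holomorphic in $\{0 < \mathrm{Re}\, z < 2\}$. The Drasin--Shea theorem then forces $\psi$ to be regularly varying at infinity of some index $\alpha \in (0,2)$ satisfying $\sigma_{d-1}\, c\, \tilde{K}(\alpha) = 1$.

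The principal obstacle is this last step. One must (a) absorb the contribution of the L\'evy density for $s$ away from zero and the Gaussian term $\eta r^2$ into a subdominant slowly-varying error, thereby upgrading the pointwise asymptotic (iii) to a genuine Mellin integral asymptotic; and (b) verify the precise hypotheses of Drasin--Shea for the explicit kernel $K$ (in particular the control of oscillations and the mean-value-type condition on $\tilde{K}$ on $(0,2)$). The other two implications are comparatively routine once the pointwise convergence $p(t,x)/t \to \nu(x)$ is granted.
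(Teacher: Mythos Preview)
Your (i) $\Rightarrow$ (ii) and (ii) $\Rightarrow$ (iii) match the paper's argument. The gap is in (iii) $\Rightarrow$ (i): the Mellin convolution $\int_0^\infty K(v)\,\psi(r/v)\,\frac{dv}{v}$ you propose, with $K(v)=1-\Omega_d(1/v)$, is divergent. Near $v=0$ one has $K(v)\to 1$ while $r/v\to\infty$, and after substituting $w=r/v$ this end contributes at least a constant multiple of $\int^\infty \psi(w)\,\frac{dw}{w}$, which is infinite for any nontrivial L\'evy exponent. Thus the self-referential asymptotic $\psi \sim c\,\sigma_{d-1}\,\calM(K,\psi)$ has no meaning, and Drasin--Shea cannot be invoked for $\psi$ itself. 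The difficulty is not the error terms you flag under (a) and (b); $\psi$ is simply too large at infinity to sit inside this convolution.

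The paper's remedy is to apply Drasin--Shea to $f(r)=r^{-d}\nu(r^{-1})$ rather than to $\psi$. The representation $\psi = \calM(k,f)$, with $k(v)=\int_{\Ss^{d-1}}(1-\cos(v\langle u_0,u\rangle))\,\sigma(du)$, is an \emph{exact} identity, and hypothesis (iii) is literally the ratio statement $\calM(k,f)(r)/f(r)\to c^{-1}$. Here the convolution converges because $f(r)\to 0$ as $r\to 0^+$ (monotonicity of $\nu$ plus integrability of $\nu$ away from the origin force $s^d\nu(s)\to 0$) and $f(r)=o(r^2)$ as $r\to\infty$ (the L\'evy condition $\int_{\norm{x}\leq 1}\norm{x}^2\,\nu(dx)<\infty$ forces $s^{d+2}\nu(s)\to 0$), matching the kernel's boundedness at infinity and its $O(v^2)$ behaviour at zero. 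The remaining Drasin--Shea hypotheses---bounded decrease of $f$ and $\rho=\limsup_{r\to\infty}\log f(r)/\log r\in(0,2)$---are obtained by invoking \cite[Theorem~26]{MR3165234}, which under (iii) yields weak upper and lower scaling of $\psi$ with exponents strictly between $0$ and $2$; this also disposes of the Gaussian part. Regular variation of $f$ then transfers to $\psi$ via (iii).
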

\begin{proof}
	We observe that Theorem \ref{densityAsymp1} yields the implication (i) $\Rightarrow$ (ii).
	Also the implication (ii) $\Rightarrow$ (iii) follows because 
	\begin{equation}
		\label{eq:23}
		\lim_{t \to 0^+}
		t^{-1} p(t,x)
		=
		\nu(x)
	\end{equation}
	vaguely on $\RR^d\setminus\{0\}$. Indeed, let $\epsilon > 0$, then there exists $\delta > 0$ such that
	for $\norm{x}\leq \delta$ and $t \psi(\norm{x}^{-1}) < \delta$
	\[
		c - \epsilon \leq \frac{t^{-1} p(t, x)}{\norm{x}^{-d} \psi\big(\norm{x}^{-1}\big)} \leq c + \epsilon.
	\]
	Hence, by taking $t$ approaching zero we get
	\[
		c - \epsilon \le 
		\frac{\nu(x)}{\norm{x}^{-d} \psi\big(\norm{x}^{-1}\big)}
		\leq 
		c + \epsilon
	\]
	for $0 < \norm{x} \leq \delta$.
	
	To prove that (iii) implies (i), we use Drasin--Shea Theorem (see \cite[Theorem 6.2]{shea}, see also 
	\cite[Theorem 5.2.1]{bgt}).	Let $u_0=\big(d^{-1/2},\ldots, d^{-1/2}\big) \in \mathbb{R}^d$ and $r>0$. Notice that
	(iii) forces the Gaussian part to vanish. Using the polar coordinates we may write
	\begin{align*}
		\psi(r) = \psi(r u_0)
		&=
		\int^\infty_0 \int_{\Ss^{d-1}} 
		\big(1-\cos( \rho r \sprod{u_0}{u})\big) \sigma({\rm d} u)
		\rho^{d-1} \nu(\rho) {\: \rm d}\rho\\
		&=
		\int^\infty_0 k(\rho^{-1} r) \rho^{-d} \nu\big(\rho^{-1}\big) \rho^{-1} {\: \rm d} \rho
	\end{align*}
	where $\sigma$ denotes the spherical measure on the unite sphere $\Ss^{d-1}$ in $\RR^d$, and
	\begin{equation}
		\label{eq:40}
		k(r) = \int_{\Ss^{d-1}} \big(1 - \cos(r \sprod{u_0}{u})\big) \sigma({\rm d} u).
	\end{equation}
	Let us recall the definition of the \emph{Mellin convolution}, see e.g. \cite[Section 4.1]{bgt}, defined
	for two functions $f, g : [0, \infty) \rightarrow \mathbb{C}$ by the formula 
	\begin{align*}
		\calM(f,g)(x) = \int _0^\infty f\big(t^{-1} x\big) g(t) t^{-1} {\: \rm d} t.
	\end{align*}
	Then, by setting $f(r)=r^{-d} \nu\big(r^{-1}\big)$, we may write 
	\begin{align*}
		\psi(r) = \calM(k,f)(r).
	\end{align*}
	Since 
	\begin{equation}\label{eq:50}
		0 \leq k(r) \leq (1 \wedge r^2) \sigma\big(\Ss^{d-1}\big)
	\end{equation}
	the Mellin transform $\check{k}$ where
	\[ 
		\check{k}(z)=\int^\infty_0 t^{-z-1} k(t) {\: \rm d} t,
	\]
	is absolutely convergent on the strip $\{z \in \CC : 0< \mathrm{Re}\, z < 2\}$. Moreover, $x \mapsto \nu(x)$
	is nonincreasing and integrable on $\big\{x \in \RR^d: \norm{x} \geq 1\big\}$, thus
	\[
		\lim_{r \to 0^+} f(r)=0.
	\]
	By (iii), there is $\delta > 0$ such that for all $\norm{x} \leq \delta$
	\[
		(c - \epsilon) \norm{x}^{-d} \psi\big(\norm{x}^{-1}\big) 
		\leq
		\nu(x) 
		\leq
		(c + \epsilon) \norm{x}^{-d} \psi\big(\norm{x}^{-1}\big).
	\]
	Hence, \cite[Theorem 26]{MR3165234} implies that $\psi$ satisfies weak upper and lower scaling, i.e.
	there are $C > 0$, and $\overline{\beta}, \underline{\beta} \in (0, 2)$, and $r_0 \geq 0$ such that for all
	$r \geq r_0$,
	\[
		C^{-1} r^{\underline{\beta}} \leq \psi(r) \leq C r^{\overline{\beta}}.
	\]
	Therefore, if $r \geq \max\big\{\delta^{-1}, r_0 \big\}$ we obtain
	\[
		C^{-1} (c-\epsilon) r^{\underline{\beta}} 
		\leq 
		r^{-d} \nu\big(r^{-1}\big)
		\leq C (c+\epsilon) r^{\overline{\beta}}.
	\]
	Thus,
	\[
		\rho = \limsup_{r \to +\infty} \frac{\log f(r)}{\log r} \in (0, 2),
	\]
	and $f$ has bounded decrease (see \cite[Section 2.1]{bgt}). Moreover,
	\[
		\lim_{r \to +\infty}
		\frac{\calM(k, f)(r)}{f(r)} 
		=
		\lim_{x \to 0}
		\frac{\psi\big(\norm{x}^{-1}\big)}{\norm{x}^{-d} \nu(x)} 
		=
		c^{-1}.
	\]
	Therefore, we may apply the Drasin--Shea theorem to conclude that $f \in \calR_\rho^\infty$ what translates
	to $\psi \in \calR_\rho^\infty$.
\end{proof}

\begin{corollary}
	If $\psi \in \RInf$ then
	\[
		\lim_{\atop{x \to 0}{t \psi(\norm{x}^{-1}) \to 0}} \frac{p(t, x)}{t \nu(x)} = 1.
	\]
\end{corollary}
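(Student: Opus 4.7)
The plan is to reduce the corollary to a quotient of two asymptotics that are already in hand, both of which carry the same constant $\mathcal{A}_{d,\alpha}$. By Theorem~\ref{densityAsymp1}, under the hypothesis $\psi \in \RInf$ with $\alpha \in (0,2)$,
\[
	\lim_{\atop{x \to 0}{t \psi(\norm{x}^{-1}) \to 0}}
	\frac{p(t, x)}{\norm{x}^{-d} t \psi\big(\norm{x}^{-1}\big)}
	=
	\mathcal{A}_{d, \alpha}.
\]
This is precisely condition (ii) of Theorem~\ref{LevyDensity} with $c = \mathcal{A}_{d,\alpha}$. The implication (ii)$\,\Rightarrow\,$(iii) established in the proof of that theorem preserves the constant (it proceeds by sandwiching $t^{-1}p(t,x)$ between $c \pm \epsilon$ multiples of $\norm{x}^{-d}\psi(\norm{x}^{-1})$ and then invoking $t^{-1}p(t,x) \to \nu(x)$), hence
\[
	\lim_{x \to 0}
	\frac{\nu(x)}{\norm{x}^{-d} \psi\big(\norm{x}^{-1}\big)}
	=
	\mathcal{A}_{d, \alpha}.
\]

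The proof is then a one-line factorisation:
\[
	\frac{p(t,x)}{t \nu(x)}
	=
	\frac{p(t,x)}{\norm{x}^{-d} t \psi\big(\norm{x}^{-1}\big)}
	\cdot
	\frac{\norm{x}^{-d} \psi\big(\norm{x}^{-1}\big)}{\nu(x)}.
\]
Under the joint limit $x \to 0$ and $t\psi\big(\norm{x}^{-1}\big) \to 0$, the first factor tends to $\mathcal{A}_{d,\alpha}$ by Theorem~\ref{densityAsymp1}. The second factor depends only on $x$, and tends to $\mathcal{A}_{d,\alpha}^{-1}$ as $x \to 0$; inversion is legitimate because $\mathcal{A}_{d,\alpha} > 0$ for $\alpha \in (0,2)$, as is clear from the explicit formula \eqref{eq:8}. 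Multiplying, the product tends to $1$.

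There is essentially no obstacle here. The whole point is that the constants in Theorems~\ref{densityAsymp1} and \ref{LevyDensity}(iii) are forced to coincide, so they cancel exactly rather than merely up to a scalar. The only minor verification needed is the strict positivity of $\mathcal{A}_{d,\alpha}$ to divide by the $\nu$-asymptotic, which is immediate.
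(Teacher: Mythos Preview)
Your proposal is correct and is precisely the argument the paper intends: the corollary is stated without proof because it follows immediately by dividing the asymptotic of Theorem~\ref{densityAsymp1} by the asymptotic in Theorem~\ref{LevyDensity}(iii), both carrying the same constant $\mathcal{A}_{d,\alpha}$. Your observation that the implication (ii)$\Rightarrow$(iii) in Theorem~\ref{LevyDensity} preserves the constant is exactly what the paper's proof of that implication shows.
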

To prove an analogue of the above theorem in the case when $\psi \in \ROrg$ we shall need the following lemma.
\begin{lemma}
	\label{Lemma_scaling}
	Suppose that there are $c > 0$ and $M > 1$ such that
	\begin{align*}
		\nu(x) \geq c \norm{x}^{-d} \psi\big(\norm{x}^{-1}\big)
	\end{align*}
	whenever $\norm{x} \geq M$. Then there are $C > 1$, and $\underline{\beta}, \overline{\beta} \in (0, 2)$
	such that for all $0 < \lambda, r \leq 1$
	\begin{equation}
		\label{weak_scaling0}
		C^{-1} \lambda^{\underline{\beta}} \psi(r)
		\leq \psi(\lambda r)
		\leq C \lambda^{\overline{\beta}} \psi(r).
	\end{equation}
\end{lemma}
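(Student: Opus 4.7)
The plan is to derive a self-referential integral inequality for $\psi$ from the hypothesis, and then read off both scaling bounds from it. The starting point is the standard two-sided comparison for the L\'evy--Khintchine exponent of an isotropic unimodal L\'evy process (see \cite{MR3165234}):
\[
	\psi(r) \asymp \eta r^2 + r^2 \int_{\norm{x} \leq 1/r} \norm{x}^2 \nu(x) \, {\rm d}x + \int_{\norm{x} > 1/r} \nu(x) \, {\rm d}x, \quad r > 0.
\]
Substituting the hypothesis $\nu(x) \geq c \norm{x}^{-d} \psi(\norm{x}^{-1})$ into the two L\'evy-measure integrals, switching to polar coordinates, and changing variables $u = \norm{x}^{-1}$, we obtain, for $r \leq 1/M$,
\[
	\psi(r) \gtrsim \int_0^r \frac{\psi(u)}{u} \, {\rm d}u + r^2 \int_r^{1/M} \frac{\psi(u)}{u^3} \, {\rm d}u.
\]
The first summand encodes the upper scaling, the second the lower scaling.

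For the upper scaling, set $H(r) = \int_0^r \psi(u)/u \,{\rm d}u$. Then $\psi \geq c_1 H$ together with $H'(r) = \psi(r)/r$ yields $(\log H)'(r) \geq c_1/r$, and hence $H(r_2)/H(r_1) \geq (r_2/r_1)^{c_1}$ for $r_1 \leq r_2 \leq 1/M$. A sub-integral estimate over $[r_1/2, r_1]$, combined with \eqref{eq:27} and \eqref{eq:2}, gives $H(r_1) \geq c_2 \psi(r_1)$, while $\psi \geq c_1 H$ itself gives $H(r_2) \leq c_1^{-1} \psi(r_2)$. Together these produce $\psi(\lambda r) \leq C \lambda^{\overline{\beta}} \psi(r)$ with $\overline{\beta} = \min\{c_1, 2 - \epsilon\} \in (0, 2)$. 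The lower scaling comes from running the parallel argument with $G(r) = \int_r^{1/M} \psi(u)/u^3 \,{\rm d}u$: now $\psi \geq c_1 r^2 G$ and $G'(r) = -\psi(r)/r^3$ give $(\log G)'(r) \leq -c_1/r$, so $G(r_1)/G(r_2) \geq (r_2/r_1)^{c_1}$ for $r_1 \leq r_2 \leq 1/(2M)$; a sub-integral bound $G(r_2) \geq c_3 \psi(r_2)/r_2^2$ over $[r_2, 2 r_2]$ combined with $\psi(r_1) \geq c_1 r_1^2 G(r_1)$ leads to $\psi(\lambda r) \geq C^{-1} \lambda^{2 - c_1} \psi(r)$, i.e.\ the lower scaling with $\underline{\beta} = \max\{2 - c_1, \epsilon\} \in (0, 2)$. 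Both inequalities are finally extended from $r \leq 1/(2M)$ to all $r \leq 1$ by continuity of $\psi$ on the compact interval $[1/(2M), 1]$, where $\psi$ is bounded between two positive constants.

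The principal obstacle I anticipate is running the Gr\"onwall-type step without assuming differentiability of $\psi$: one proceeds either by working with the integral inequality directly, or by passing through $\psi^*$, which is monotone and comparable to $\psi$ by \eqref{eq:27} and \eqref{eq:2}. The same monotonicity consideration is what makes the sub-integral lower bounds $H(r_1) \gtrsim \psi(r_1)$ and $G(r_2) \gtrsim \psi(r_2)/r_2^2$ carry the correct dependence on the endpoints.
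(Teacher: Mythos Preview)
Your approach is correct and takes a genuinely different route from the paper's. The paper first proves $\lim_{r\to 0^+} r^2/\psi(r)=0$, then passes to an auxiliary complete Bernstein function $\phi(\lambda)\asymp\psi(\sqrt{\lambda})$ built from $\nu$ and reads off the scaling bounds from differential inequalities for $\phi$ and its conjugate $\phi_1(\lambda)=\lambda/\phi(\lambda)$, invoking estimates on $\phi'$ from \cite{MR3165234} and the potential-density representation from \cite{ssv}. You stay with $\psi$ throughout: feeding the hypothesis into the Pruitt-type comparison $\psi\asymp h$ yields the self-referential inequality $\psi(r)\gtrsim H(r)+r^2 G(r)$, and Gr\"onwall on the absolutely continuous integrals $H,G$ gives both bounds directly. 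This is more elementary and sidesteps the Bernstein-function machinery entirely; the paper's route, on the other hand, exploits smoothness of $\phi$ to get clean differential inequalities without the sub-integral comparisons you need.

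Two small remarks. First, your concern about differentiability of $\psi$ is moot: the Gr\"onwall step runs on $H$ and $G$, which are absolutely continuous by construction, and the sub-integral bounds $H(r_1)\gtrsim\psi(r_1)$, $G(r_2)\gtrsim\psi(r_2)/r_2^2$ only use $\psi\asymp\psi^*$ via \eqref{eq:27} and \eqref{eq:2}, exactly as you indicate. Second, the hedging with $\overline{\beta}=\min\{c_1,2-\epsilon\}$ and $\underline{\beta}=\max\{2-c_1,\epsilon\}$ is unnecessary: your own lower-scaling conclusion $\psi(r_1)\geq c_1 c_3 (r_1/r_2)^{2-c_1}\psi(r_2)$, sent to $r_1\to 0^+$ with $r_2$ fixed, already forces $c_1<2$ (and in fact compatibility of the two bounds forces $c_1\leq 1$), so $\overline{\beta}=c_1$ and $\underline{\beta}=2-c_1$ both lie in $(0,2)$ automatically.
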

\begin{proof}
	We adapt the proof of \cite[Theorem 26]{MR3165234} to the current settings. First, let us notice that
	\begin{equation}
		\label{lim0psi}
		\lim_{r \to 0^+} 
		\frac{r^2}{\psi(r)}=0.
	\end{equation}
	Indeed, by \cite[Corollary 1]{MR3225805}, for $0 < r<M^{-1}$, 
	\[
		\psi^*(r)
		\geq 
		\frac{r^2}{24d} \int_{r \norm{x} \leq 1} \norm{x}^2 \nu(x) {\: \rm d}x
		\gtrsim 
		r^2 \int^{r^{-1}}_{M} u \psi\big(u^{-1}\big) {\: \rm d}u.
	\]
	Moreover, by \eqref{eq:2} and \eqref{eq:27}, for $u \geq 1$,
	\[
		\pi^2 \psi(u^{-1})\geq \psi^*(u^{-1})\geq \frac{1}{2(1+u^2)}\psi^*(1).
	\]
	Thus,
	\[
		\psi(r)
		\gtrsim 
		r^2
		\int^{r^{-1}}_{M}\frac{u}{1+u^2} {\: \rm d}u,
	\]
	what implies \eqref{lim0psi}.
	
	In view of \eqref{lim0psi}, we may assume that $\mathbf X$ is pure-jump, i.e. n \eqref{eq:28}, $\eta = 0$. Let
	us consider a function $\phi: [0, +\infty) \rightarrow \RR$ defined by
	\begin{align*}
		\phi (\lambda) = \int_0^{\infty} \frac{\lambda}{\lambda + s} \nu\big(s^{-1/2}\big) s^{-1-d/2} {\: \rm d}s.
	\end{align*}
	Then
	\[
		\phi(\lambda) = \int_0^\infty \big(1 - e^{-\lambda r}\big) \mu(r) {\: \rm d}r,
	\]
	where
	\[
		\mu(r) = \int_0^\infty e^{-r s} \nu\big(s^{-1/2}\big) s^{-d/2} {\: \rm d} s.
	\]
	By \cite[the proof of Theorem 6.2]{ssv}, the function $\phi$ is a complete Bernstein function. According to
	\cite[estimates (28)--(30)]{MR3165234}, there is $C > 0$ such
	\begin{equation}
		\label{eq:22}
		C^{-1} \phi(\lambda) \leq \psi(\sqrt{\lambda}) \leq C \phi(\lambda), \quad \text{for all } \lambda > 0,
	\end{equation}
	and
	\[
		\mu(r) \leq C r^{-2} \phi'\big(r^{-1}\big), \quad \text{for all } r > 0,
	\]
	and
	\[
		\nu(x) \leq C \norm{x}^{-d+2} \mu\big(\norm{x}^2\big), \quad \text{for all } x \neq 0.
	\]
	Therefore, for $\norm{x} > M$ we get for some $c>0$
	\begin{equation}
		\label{eq:13}
		c C^{-1} \phi\big(\norm{x}^{-2}\big)
		\leq 
		c \psi\big(\norm{x}^{-1}\big)
		\leq
		\norm{x}^d \nu (x)
		\leq
				C^2 \norm{x}^{-2} \phi'\big(\norm{x}^{-2}\big).
	\end{equation}
	Hence, there is $\overline{\beta} > 0$ such that for all $\lambda \in \big(0, M^{-2}\big)$,
	\[
		\overline{\beta} \phi(\lambda) \leq \lambda \phi'(\lambda).
	\]
	In particular, the function $\lambda \mapsto \lambda^{-\overline{\beta}} \phi(\lambda)$ is nondecreasing on
	$\big(0,M^{-2}\big)$, thus, for all $u \in (0, 1)$ and $\lambda \in \big(0, M^{-2}\big)$
	\[
		(u \lambda)^{-\overline{\beta}} \phi(u \lambda) \leq \lambda^{-\overline{\beta}} \phi(\lambda), 
	\]
	what implies the upper bound of \eqref{weak_scaling0} due to \eqref{eq:22} and continuity of $\psi$. From
	\eqref{lim0psi} we may deduce that $\overline{\beta} < 2$.

	Next, we show the lower scaling at zero. As $\phi$ is a complete Bernstein function, we have that
	$\phi_1 : [0, +\infty) \rightarrow \RR$ where
	\[
		\phi _1(\lambda )=\frac{\lambda}{\phi (\lambda)},
	\]
	is a special Bernstein function. Since $\mathbf X$ is pure-jump L\'{e}vy process, 
	\[
		\lim_{\norm{x} \to +\infty} \frac{\psi(x)}{\norm{x}^2} = 0,
	\]
	Thus, by \eqref{eq:22} we conclude
	\[
		\lim_{\lambda \to +\infty} \phi_1(\lambda) = +\infty.
	\]
	Moreover, $\phi(0) = 0$. Hence, the potential measure of the subordinator with
	the Laplace exponent $\phi_1$ (see \cite[(10.9) and Theorem 10.3]{ssv})	is absolutely continuous with
	the density function 
	\begin{align*}
		f(s)=\int _s^\infty \mu(u) {\: \rm d}u.
	\end{align*}
	We observe that, by \eqref{eq:13}, for $s > M^2$,
	\[
		\mu(s) \geq c C^{-2} \phi\big(s^{-1}\big),
	\]
	thus,
	\begin{align}
		\nonumber
		f(s)
		& \geq
		c C^{-2}
		\int_s^\infty \phi \big(u^{-1}\big) u^{-1} {\: \rm d}u \\
		\nonumber
		& \geq 
		cC^{-2} \int _s^\infty \phi'\big(u^{-1}\big) u^{-2} {\: \rm d}u \\
		\label{eq:24}
		& =
		c C^{-2} \phi\big(s^{-1}\big). 
	\end{align}
	Since $\calL f = 1/\phi_1$, by \cite[Lemma 5]{MR3165234}, there is $D > 0$ such that for $s > 0$,
	\[
		f(s) \leq D \frac{\phi_1'\big(s^{-1}\big)}{s^2 \phi_1^2\big(s^{-1}\big)}.
	\]
	Hence, by \eqref{eq:24}, there is $\underline{\beta} > 0$ such that for $\lambda \in \big(0, M^{-2}\big)$
	\begin{equation}
		\label{eq:26}
		\underline{\beta} \phi _1(\lambda) \leq \lambda \phi_1'(\lambda).
	\end{equation}
	Therefore, for all $u \in (0, 1)$ and $\lambda \in \big(0, M^{-2}\big)$
	\[
	 	\frac{(u\lambda)^{1-\underline{\beta}}}{\phi(u\lambda)} =
		(u \lambda)^{-\underline{\beta}} \phi_1(u \lambda) \leq \lambda^{-\underline{\beta}} \phi_1(\lambda)
		=
		\frac{\lambda^{1-\underline{\beta}}}{\phi(\lambda)},
	\]
	what implies the left inequality in \eqref{weak_scaling0}. Let us observe that since $\phi_1 $ is concave,
	we have
	\[
		\lambda \phi_1 '(\lambda)\leq \phi_1 (\lambda),
	\]
	thus, \eqref{eq:26} forces $\underline{\beta} < 1$.
\end{proof}

\begin{theorem}
	\label{LevyDensity2}
	Let $\mathbf X = (X_t : t \geq 0)$ be an isotropic unimodal L\'{e}vy process on $\RR^d$ with
	the characteristic exponent $\psi$ and the L\'{e}vy density $\nu$. Then the following are equivalent:
	\begin{enumerate}
		\item $\psi\in \ROrg$, for some $\alpha\in(0,2)$;
		\item there is $c>0$
		\[
			\lim _{\atop{\norm{x} \to +\infty}{t \psi(|x|^{-1}) \to 0}}
			\frac{p(t, x)}{\norm{x}^{-d} t\psi\big(\norm{x}^{-1}\big)}=c;
		\]
	\item there is $c>0$
		\[
			\lim_{\norm{x} \to +\infty}
			\frac{\nu(x)} {\norm{x}^{-d} \psi\big(\norm{x}^{-1}\big)}=c.
		\]
	\end{enumerate}
\end{theorem}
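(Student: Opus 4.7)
The proof follows the three-step pattern of Theorem \ref{LevyDensity}. The implication (i) $\Rightarrow$ (ii) is immediate from Theorem \ref{densityAsymp} with $c = \mathcal{A}_{d,\alpha}$. For (ii) $\Rightarrow$ (iii), I would exploit the vague convergence $t^{-1} p(t, \:\cdot\:) \to \nu$ on $\RR^d \setminus \{0\}$ as $t \to 0^+$: given $\epsilon > 0$, (ii) furnishes $M, \delta > 0$ such that
\[
	c - \epsilon \leq \frac{t^{-1} p(t, x)}{\norm{x}^{-d} \psi(\norm{x}^{-1})} \leq c + \epsilon
\]
whenever $\norm{x} \geq M$ and $t \psi(\norm{x}^{-1}) < \delta$. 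For each fixed $x$ with $\norm{x} \geq M$ the side condition is automatic as $t \to 0^+$, and integrating against a test function supported near $x$ and passing to the limit yields (iii) with the same constant.

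The substantive step is (iii) $\Rightarrow$ (i), which I would establish by invoking the Drasin--Shea theorem (\cite[Theorem 6.2]{shea}, cf. \cite[Theorem 5.2.1]{bgt}) in the same spirit as the proof of Theorem \ref{LevyDensity}. First, (iii) certainly implies $\nu(x) \geq (c/2) \norm{x}^{-d} \psi(\norm{x}^{-1})$ for $\norm{x}$ sufficiently large, so Lemma \ref{Lemma_scaling} provides weak upper and lower scalings of $\psi$ at zero with exponents $\underline{\beta}, \overline{\beta} \in (0, 2)$. With $u_0 = (d^{-1/2}, \ldots, d^{-1/2})$ and polar coordinates in \eqref{eq:28} we obtain, as before,
\[
	\psi(r) = \int_0^\infty k(\rho^{-1} r) \rho^{-d} \nu(\rho^{-1}) \rho^{-1} {\: \rm d}\rho = \calM(k, h)(r),
\]
with $h(\rho) = \rho^{-d} \nu(\rho^{-1})$ and $k$ the kernel \eqref{eq:40}; in these terms (iii) reads $h(r)/\psi(r) \to c$ as $r \to 0^+$.

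Since Drasin--Shea is usually stated for regular variation at infinity, I would translate via $r \mapsto r^{-1}$, setting $\tilde\psi(r) = \psi(r^{-1})$ and $\tilde h(r) = h(r^{-1}) = r^d \nu(r)$. A change of variables in the Mellin convolution produces $\tilde\psi = \calM(\tilde k, \tilde h)$ with the reciprocal kernel $\tilde k(x) = k(x^{-1})$, whose Mellin transform satisfies $\check{\tilde k}(z) = \check k(-z)$ and is therefore absolutely convergent on $\{-2 < \mathrm{Re}\, z < 0\}$. The scalings from Lemma \ref{Lemma_scaling} transfer to bounded decrease of $\tilde\psi$ (hence of $\tilde h$, which is asymptotically proportional) at infinity with positive index strictly less than $2$, and the hypothesis becomes $\calM(\tilde k, \tilde h)(r)/\tilde h(r) \to c^{-1}$ as $r \to +\infty$. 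The Drasin--Shea theorem then forces $\tilde h \in \calR^\infty_\rho$ for some $\rho \in (0, 2)$, equivalently $\psi \in \ROrg$ with $\alpha = \rho$. The main obstacle is the careful verification of the Drasin--Shea hypotheses after this reciprocal change of variables---admissibility of $\tilde k$ on the relevant strip, and the precise form of bounded decrease of $\tilde h$ demanded by that theorem---which is exactly what Lemma \ref{Lemma_scaling} was designed to furnish.
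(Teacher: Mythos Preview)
Your proposal follows essentially the same route as the paper: Theorem \ref{densityAsymp} for (i)$\Rightarrow$(ii), vague convergence for (ii)$\Rightarrow$(iii), and Drasin--Shea combined with Lemma \ref{Lemma_scaling} for (iii)$\Rightarrow$(i), after passing to the reciprocal variable so as to work at infinity. Two points deserve attention. First, a sign slip: with $\tilde h(r)=r^d\nu(r)\sim c\,\psi(r^{-1})$ one gets $\rho=\limsup_{r\to\infty}\log\tilde h(r)/\log r\in(-2,0)$, not $(0,2)$; this matches the strip $\{-2<\mathrm{Re}\,z<0\}$ for $\check{\tilde k}$ and yields $\psi\in\calR^0_{-\rho}$. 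Second, the paper inserts a truncation $\tilde F(r)=F(r)\ind{[1,\infty)}(r)$ (your $\tilde h$) before invoking Drasin--Shea, because $r\mapsto r^d\nu(r)$ need not be locally bounded near the origin; one then checks, using \eqref{eq:50} and \eqref{lim0psi}, that $\calM(K,F)(r)/\calM(K,\tilde F)(r)\to 1$, so the ratio hypothesis transfers to $\tilde F$. You flag ``careful verification of the Drasin--Shea hypotheses'' as the main obstacle, but this truncation is precisely the missing ingredient there, together with the observation (also via Lemma \ref{Lemma_scaling}, through \eqref{lim0psi}) that the Gaussian part may be taken to vanish so that the Mellin-convolution representation of $\psi$ is exact.
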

\begin{proof}
	The proof is similar to Theorem \ref{LevyDensity}. First, in view of Theorem \ref{densityAsymp} and \eqref{eq:23},
	it is enough to show that (iii) implies (i).

	Here, again we use the Drasin--Shea theorem. In light of Lemma \ref{Lemma_scaling} we may assume that the process
	$\mathbf X$ is pure-jump. Let $u_0 = \big(d^{-1/2}, \ldots, d^{-1/2}\big) \in \RR^d$. For $r > 0$ we can write
	\[
		\psi\big(r^{-1}\big) = \calM(K, F)(r)
	\]
	where $F(r) = r^d \nu(r)$ and
	\[
		K(r) = \int_{\Ss^{d-1}} \big(1 - \cos\big(r^{-1} \sprod{u_0}{u}\big)\big) \sigma({\rm d} u).
	\]
	Notice that the function $F$ does not vanish at zero. Therefore, instead of $F$ we consider
	\[
		\tilde{F}(r) = 
		\begin{cases}
			F(r) & \text{if } r \geq 1,\\
			0    & \text{otherwise.}
		\end{cases}
	\]
	Since	
	\[
		0 \leq \calM(K, F)(r) - \calM(K, \tilde{F})(r) \leq C r^{-2} \int_0^1 t^{1+d} \nu(t) {\: \rm d}t,
	\]
	by \eqref{lim0psi}, we have
	\[
		\lim_{r \to +\infty}
		\frac{\calM(K, F)(r)}{\calM(K, \tilde{F})(r)} = 1.
	\]
	By (iii), for $\epsilon > 0$ there is $M_0 > 1$ such that for all $\norm{x} \geq M_0$,
	\[
		(c - \epsilon) \norm{x}^{-d} \psi\big(\norm{x}^{-1}\big)
		\leq
		\nu(x)
		\leq
		(c + \epsilon) \norm{x}^{-d} \psi\big(\norm{x}^{-1}\big).
	\]
	Hence, by Lemma \ref{Lemma_scaling}, there are $C > 1$, and $\underline{\beta}, \overline{\beta} \in (0, 2)$,
	such that for all $\norm{x} \geq M_0$,
	\[
		C^{-1} (c-\epsilon) \norm{x}^{-\underline{\beta}} \psi\big(M_0^{-1}\big)
		\leq
		\norm{x}^d \nu(x) 
		\leq 
		C (c+\epsilon) \norm{x}^{-\overline{\beta}} \psi\big(M_0^{-1}\big).
	\]
	In particular,
	\[
		\rho = \limsup_{r \to +\infty} \frac{\log \tilde{F}(r)}{\log r} \in (-2, 0),
	\]
	and $\tilde{F}$ has bounded decrease. Again, by (iii), we get
	\begin{align*}
		\lim_{r \to +\infty}
		\frac{\calM(K,\tilde{F})(r)}{\tilde{F}(r)} 
		=
		\lim_{r \to +\infty} \frac{\calM(K, F)(r)}{F(r)}
		=
		\lim _{\norm{x} \to +\infty } \frac{\norm{x}^{-d} \psi\big(\norm{x}^{-1} \big)}{\nu (x)}=c^{-1}.
	\end{align*}
	Now, we may apply the Drasin--Shea theorem to obtain $\tilde{F} \in \calR_\rho^\infty$. Therefore,
	$F \in \calR_\rho^\infty$ what implies $\psi \in \calR_\rho^0$.
\end{proof}

\begin{corollary}
	If $\psi \in \ROrg$ then
	\[
		\lim_{\atop{\norm{x} \to +\infty}{t \psi(\norm{x}^{-1}) \to 0}} \frac{p(t, x)}{t \nu(x)} = 1.
	\]
\end{corollary}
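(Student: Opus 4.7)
The plan is to reduce the claim to the two asymptotics already established in this section. Specifically, I would factor the target ratio as
\[
	\frac{p(t,x)}{t\,\nu(x)} = \frac{p(t,x)}{\norm{x}^{-d}\, t\,\psi\big(\norm{x}^{-1}\big)} \cdot \frac{\norm{x}^{-d}\,\psi\big(\norm{x}^{-1}\big)}{\nu(x)},
\]
and handle the two factors separately along the prescribed regime $\norm{x}\to+\infty$ with $t\psi(\norm{x}^{-1})\to 0$. The first factor is controlled directly by Theorem \ref{densityAsymp}, whose limit is $\mathcal{A}_{d,\alpha}$. For the second factor, since $\psi\in\ROrg$ the equivalence in Theorem \ref{LevyDensity2} yields condition (iii) with some constant $c>0$, so $\norm{x}^{-d}\psi(\norm{x}^{-1})/\nu(x) \to 1/c$ as $\norm{x}\to+\infty$ (this factor is independent of $t$, so uniform-in-$t$ issues do not arise). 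Provided $c=\mathcal{A}_{d,\alpha}$, multiplying the two limits gives $1$.

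The only genuinely substantive point is therefore the identification $c=\mathcal{A}_{d,\alpha}$. I would obtain this exactly as in the implication (ii) $\Rightarrow$ (iii) in the proof of Theorem \ref{LevyDensity}: using the vague convergence $t^{-1}p(t,\cdot)\to\nu(\cdot)$ on $\RR^d\setminus\{0\}$, one pinches the ratio $t^{-1}p(t,x)/(\norm{x}^{-d}\psi(\norm{x}^{-1}))$ between $c-\varepsilon$ and $c+\varepsilon$ for suitable $x$ and $t\psi(\norm{x}^{-1})$ small, and then sends $t\to 0^+$ with $x$ fixed; the same constant is therefore transferred from condition (ii) to condition (iii) of Theorem \ref{LevyDensity2}. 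Since Theorem \ref{densityAsymp} pins down the constant in (ii) as precisely $\mathcal{A}_{d,\alpha}$, we obtain $c=\mathcal{A}_{d,\alpha}$, and the product collapses to $\mathcal{A}_{d,\alpha}\cdot\mathcal{A}_{d,\alpha}^{-1}=1$.

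In terms of difficulty, there is no real obstacle once Theorems \ref{densityAsymp} and \ref{LevyDensity2} are available; the proof is essentially a one-line combination. The only small care needed is the bookkeeping of constants, to ensure that the anonymous $c$ of Theorem \ref{LevyDensity2}(iii) really is the same $\mathcal{A}_{d,\alpha}$ that appears in Theorem \ref{densityAsymp}. This is handled by the constant-preserving vague-limit argument sketched above.
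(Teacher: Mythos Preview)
Your proposal is correct and is precisely the argument the paper intends: the corollary is stated without proof, as it follows immediately by combining Theorem \ref{densityAsymp} with Theorem \ref{LevyDensity2}(iii), the constant in the latter being identified as $\mathcal{A}_{d,\alpha}$ via the vague limit \eqref{eq:23} exactly as in the (ii)$\Rightarrow$(iii) step of Theorem \ref{LevyDensity}. There is nothing to add.
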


\subsection{Green function asymptotic}
\label{sec:4.2}
In this subsection we assume $d \geq 3$. Therefore, the isotropic unimodal L\'{e}vy process $\mathbf X = (X_t: t \geq 0)$
is transient and the associated potential measure $G$ is well-defined,
\begin{align*}
	G(x, A)=\int _{0}^{\infty } \PP_x(X_t \in A) {\: \rm d}t,
\end{align*}  
where $\PP_x$ is the standard measure $\PP(\; \cdot\; | X_0 = x)$ and $A \subset \RR^d$ is a Borel set. We set
$G(A)=G(0,A)$. We also use the same notation $G$ for the density of the part of the potential measure absolutely
continuous with respect to the Lebesgue measure. Thus, we have $G(x,y)= G(0,y-x)$. We set $G(x)=G(0,x)$.
\begin{theorem}
	\label{GreenAsymp}
	Assume that $\psi \in \ROrg$, for some $\alpha \in [0, 2]$. Then
	\begin{align*}
		\lim _{r \to +\infty}
		\psi\big(r^{-1}\big) G\big(\{x : \norm{x} \leq r\}\big)
		= \mathcal{\tilde{C}}_{d, \alpha},
	\end{align*}
	where
	\[
		\mathcal{\tilde{C}}_{d,\alpha} =
		2^{-\alpha} \frac{\Gamma\big((d-\alpha)/2\big)}{\Gamma(d/2) \Gamma(1+\alpha /2 )}.
	\]
\end{theorem}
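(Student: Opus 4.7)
The plan is to reduce the statement to Karamata's Tauberian theorem applied to a Laplace transform that can be computed explicitly via Gaussian Fourier duality. Let $\tilde{H}(r) = G\big(\{x \in \RR^d : \norm{x}^2 \leq r\}\big)$, so that $\tilde{H}$ is non-decreasing and the function $H(\rho) = G\big(\{x : \norm{x} \leq \rho\}\big)$ appearing in the statement equals $\tilde{H}(\rho^2)$. It is therefore enough to establish the equivalent asymptotic $\tilde{H}(r) \sim \mathcal{\tilde{C}}_{d,\alpha}/\psi\big(r^{-1/2}\big)$ as $r \to +\infty$.

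For the key computation, I would start from the representation
\[
	\calL \tilde{H}(\lambda) = \int_{\RR^d} e^{-\lambda \norm{x}^2} G({\rm d}x) = \int_0^\infty \int_{\RR^d} e^{-\lambda \norm{x}^2} p(t, {\rm d}x) {\: \rm d}t,
\]
insert the Gaussian identity $e^{-\lambda \norm{x}^2} = (4\pi\lambda)^{-d/2} \int_{\RR^d} e^{-\norm{\xi}^2/(4\lambda)} e^{-i\sprod{x}{\xi}} {\: \rm d}\xi$, and use $\int_0^\infty e^{-t \psi(\xi)} {\: \rm d}t = \psi(\xi)^{-1}$. Tonelli's theorem then yields
\[
	\calL \tilde{H}(\lambda) = (4\pi\lambda)^{-d/2} \int_{\RR^d} \frac{e^{-\norm{\xi}^2/(4\lambda)}}{\psi(\xi)} {\: \rm d}\xi,
\]
where local integrability of $\psi^{-1}$ near the origin is guaranteed by $\psi \in \ROrg$ together with $d \geq 3 > \alpha$. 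Passing to polar coordinates and substituting $r = u\sqrt{\lambda}$ brings the Laplace transform into the convenient form
\[
	\psi\big(\sqrt{\lambda}\big) \calL \tilde{H}(\lambda) = \frac{2^{1-d}}{\Gamma(d/2)} \int_0^\infty \frac{\psi\big(\sqrt{\lambda}\big)}{\psi\big(u \sqrt{\lambda}\big)} e^{-u^2/4} u^{d-1} {\: \rm d}u.
\]

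The main obstacle is justifying the passage to the limit $\lambda \to 0^+$ by dominated convergence. The pointwise convergence $\psi\big(\sqrt{\lambda}\big)/\psi\big(u\sqrt{\lambda}\big) \to u^{-\alpha}$ is immediate from $\psi \in \ROrg$. For the dominant, I split at $u = 1$. For $u \geq 1$, the unimodality assumption in the form \eqref{eq:27} together with the monotonicity of $\psi^*$ gives
\[
	\pi^2 \psi\big(u\sqrt{\lambda}\big) \geq \psi^*\big(u \sqrt{\lambda}\big) \geq \psi^*\big(\sqrt{\lambda}\big) \geq \psi\big(\sqrt{\lambda}\big),
\]
so the ratio is bounded by $\pi^2$. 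For $u \in (0, 1)$ and $\lambda$ small enough that both $\sqrt{\lambda}$ and $u\sqrt{\lambda}$ lie in the Potter regime, estimate \eqref{eq:14} applied to the slowly varying part of $\psi$ yields $\psi\big(\sqrt{\lambda}\big)/\psi\big(u\sqrt{\lambda}\big) \leq C u^{-\alpha - \epsilon}$. Because $d \geq 3$ and $\alpha \leq 2$, one may choose $\epsilon > 0$ with $d - \alpha - \epsilon > 0$, making $u^{d-1-\alpha-\epsilon} e^{-u^2/4}$ integrable on $(0, 1)$. Dominated convergence and the Gamma integral $\int_0^\infty u^{d-1-\alpha} e^{-u^2/4} {\: \rm d}u = 2^{d-1-\alpha} \Gamma\big((d-\alpha)/2\big)$ then deliver
\[
	\lim_{\lambda \to 0^+} \psi\big(\sqrt{\lambda}\big) \calL \tilde{H}(\lambda) = \frac{2^{-\alpha} \Gamma\big((d-\alpha)/2\big)}{\Gamma(d/2)}.
\]

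To conclude, I would write $1/\psi\big(\sqrt{\lambda}\big) = \lambda^{-\alpha/2} L(1/\lambda)$ for an explicit function $L$ slowly varying at infinity and invoke Karamata's Tauberian theorem \cite[Theorem 1.7.1]{bgt} for the non-decreasing $\tilde{H}$. This gives
\[
	\tilde{H}(r) \sim \frac{1}{\Gamma(1 + \alpha/2)} \cdot \frac{2^{-\alpha} \Gamma\big((d-\alpha)/2\big)}{\Gamma(d/2) \psi\big(r^{-1/2}\big)} = \frac{\mathcal{\tilde{C}}_{d,\alpha}}{\psi\big(r^{-1/2}\big)}
\]
as $r \to +\infty$. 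Setting $r = \rho^2$ translates this into $H(\rho) \sim \mathcal{\tilde{C}}_{d,\alpha}/\psi\big(\rho^{-1}\big)$, which is precisely the claimed limit.
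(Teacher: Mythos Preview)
Your proof is correct and follows essentially the same route as the paper: compute the Laplace--Stieltjes transform of $r\mapsto G(\{\norm{x}^2\le r\})$ via the Gaussian Fourier identity and \eqref{eq:9}, pass to the limit by dominated convergence, and finish with Karamata's Tauberian theorem. The only substantive difference is in the dominant: the paper obtains the single uniform bound $\psi(\sqrt{\lambda})/\psi(r\sqrt{\lambda})\le 2\pi^2(1+r^{-2})$ directly from \eqref{eq:2} and \eqref{eq:27}, which works for all $\lambda>0$ and avoids splitting at $u=1$ and invoking Potter bounds; your two-range argument is a legitimate alternative and yields the same conclusion.
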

\begin{proof}
	Let us define
	\begin{align*}
		f(r)
		=
		G\big(\{x : \norm{x} \leq \sqrt{r}\}\big)
		=
		\int_0^\infty 
		\int_{\norm{x} \leq \sqrt{r}} p(t, {\rm d} x)
		{\: \rm d}t.
	\end{align*}
	Hence, by the Fubini--Tonelli's theorem
	\begin{align*}
		\calL f(\lambda)
		=
		\lambda^{-1}
		\int_0^\infty
		\int_{\RR^d}
		e^{-\lambda \norm{x}^2}
		p(t, {\rm d}x)
		{\: \rm d}t.
	\end{align*}
	By \eqref{eq:25}, the second application of the Fubini--Tonelli's theorem gives
	\begin{align*}
		\calL f (\lambda) 
		&=
		(4 \pi)^{-d/2}
		\lambda^{-1}
		\int_0^\infty
		\int_{\RR^d}
		\int_{\RR^d}
		\cos \sprod{x}{\xi\sqrt{\lambda}}
		\: p(t, {\rm d}x)
		e^{-\frac{\norm{\xi}^2}{4}} 
		{\: \rm d}\xi
		{\: \rm d}t\\
		&=
		(4\pi)^{-d/2}
		\lambda^{-1}
		\int_0^\infty
		\int_{\RR^d}
		e^{-t \psi(\xi \sqrt{\lambda})}
		e^{-\frac{\norm{\xi}^2}{4}} 
		{\: \rm d}\xi
		{\: \rm d}t
	\end{align*}
	where in the last equality we have used \eqref{eq:9}. Finally, after integration with respect to $t$ 
	with a help of polar coordinates we may write
	\begin{align*}
		\calL f(\lambda) 
		&=
		(4 \pi)^{-d/2} \lambda^{-1}
		\int_{\RR^d}
		e^{-\frac{\norm{\xi}^2}{4}} \frac{{\rm d}\xi}{\psi(\xi \sqrt{\lambda})}\\
		&=
		\frac{2^{1-d}}{\Gamma(d/2)}
		\lambda^{-1}
		\int_0^\infty
		e^{-\frac{r^2}{4}} r^{d-1} \frac{{\rm d}r}{\psi(r \sqrt{\lambda})}.
	\end{align*}
	Since the process $\mathbf X$ is unimodal, $\psi$ and $\psi^*$ are comparable, by \eqref{eq:27}. Therefore,
	by \eqref{eq:2}
	\[
		\psi(\sqrt{\lambda}) \leq \psi^*(\sqrt{\lambda}) \leq 2(r^{-2} + 1) \psi^*(r \sqrt{\lambda})
		\leq 2\pi^2 (r^{-2} + 1) \psi(r \sqrt{\lambda}).
	\]
	Because $d \geq 3$, by the dominated convergence we obtain
	\[
		\lim_{\lambda \to 0^+}
		\frac{\lambda \calL f(\lambda)}{\psi(\sqrt{\lambda})} 
		= 2^{-\alpha} \frac{\Gamma\big((d-\alpha)/2\big)}{\Gamma(d/2)}.
	\]
	Hence, by a Tauberian theorem (see e.g. \cite[Theorem 1.7.1]{bgt}) we conclude the proof.
\end{proof}

\begin{corollary}
	\label{cor:1}
	Assume that $\psi \in \ROrg$, for some $\alpha \in (0, 2]$. Then
	\begin{align*}
		\lim _{\norm{x} \to +\infty} \norm{x}^{d} \psi\big(\norm{x}^{-1} \big) G(x) = \mathcal{\tilde{A}}_{d, \alpha},
	\end{align*}
	where
	\[
		\mathcal{\tilde{A}}_{d, \alpha} = 2^{-\alpha} \pi^{-d/2} \frac{\Gamma\big((d-\alpha)/2\big)}{\Gamma(\alpha/2)}.
	\]
\end{corollary}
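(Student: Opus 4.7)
The plan is to apply a classical monotone density argument to extract the pointwise asymptotics of $G(x)$ from the ball-volume asymptotics obtained in Theorem \ref{GreenAsymp}, in the same spirit in which Theorem \ref{densityAsymp} was deduced from the tail asymptotics $F_t(r)$. The crucial structural input is that $G$ is radial and non-increasing on $\RR^d \setminus \{0\}$, which follows from the isotropic unimodality of each $p(t, \cdot)$ by integration in $t$.

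First, I would set $H(r) = G(\{x : \norm{x} \leq r\})$ and observe that, since any singular part of the potential measure of an isotropic unimodal L\'evy process is concentrated at the origin, for $0 < a < b$
\[
	H(b) - H(a) = c_d \int_a^b u^{d-1} G(u) \, {\: \rm d}u, \qquad c_d = \frac{2\pi^{d/2}}{\Gamma(d/2)}.
\]
Monotonicity of $G$ on $(0, +\infty)$ yields the two-sided sandwich
\[
	\frac{c_d}{d}(b^d - a^d) G(b) \leq H(b) - H(a) \leq \frac{c_d}{d}(b^d - a^d) G(a).
\]
Next, for a fixed $\epsilon \in (0, 1)$ I would specialize to $(a, b) = (r, (1+\epsilon) r)$ and $(a, b) = ((1-\epsilon) r, r)$ and rearrange to bound $r^d G(r)$ below and above by multiples of the increments $H((1+\epsilon)r) - H(r)$ and $H(r) - H((1-\epsilon)r)$, respectively. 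Multiplying these by $\psi(r^{-1})$, Theorem \ref{GreenAsymp} combined with the regular variation relation $\psi(r^{-1})/\psi((\lambda r)^{-1}) \to \lambda^\alpha$ as $r \to +\infty$, valid for each fixed $\lambda > 0$, converts these increments into explicit functions of $\epsilon$ involving $\mathcal{\tilde{C}}_{d,\alpha}$.

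The outcome is
\[
	\frac{d \mathcal{\tilde{C}}_{d,\alpha}\big((1+\epsilon)^\alpha - 1\big)}{c_d\big((1+\epsilon)^d - 1\big)}
	\leq \liminf_{r \to +\infty} r^d \psi(r^{-1}) G(r)
	\leq \limsup_{r \to +\infty} r^d \psi(r^{-1}) G(r)
	\leq \frac{d \mathcal{\tilde{C}}_{d,\alpha}\big(1 - (1-\epsilon)^\alpha\big)}{c_d\big(1 - (1-\epsilon)^d\big)}.
\]
Passing to the limit $\epsilon \to 0^+$ on both sides, using $(1 \pm \epsilon)^\beta - 1 \sim \pm \beta \epsilon$, both bounds collapse to $\alpha \mathcal{\tilde{C}}_{d,\alpha}/c_d$. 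A short computation, invoking $\Gamma(1+\alpha/2) = (\alpha/2) \Gamma(\alpha/2)$, identifies this common value with $\mathcal{\tilde{A}}_{d,\alpha}$, which completes the argument. The only mildly delicate step is the verification that the possible singular part of the potential measure does not disturb the identity for $H(b) - H(a)$ with $0 < a < b$; everything else is a transparent transfer from the Tauberian-style conclusion of Theorem \ref{GreenAsymp} to pointwise asymptotics via the non-increasing radial structure of $G$.
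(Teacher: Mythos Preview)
Your proposal is correct and follows exactly the approach the paper indicates: it invokes Theorem~\ref{GreenAsymp} and reproduces the monotone density argument from the proof of Theorem~\ref{densityAsymp}, exploiting the radial monotonicity of $G$ inherited from the unimodality of $p(t,\cdot)$. Your explicit handling of the singular part of the potential measure and the verification that $\alpha\,\mathcal{\tilde{C}}_{d,\alpha}/c_d = \mathcal{\tilde{A}}_{d,\alpha}$ via $\Gamma(1+\alpha/2) = (\alpha/2)\Gamma(\alpha/2)$ are both accurate and simply make precise what the paper leaves implicit.
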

\begin{proof}
	For the proof we use Theorem \ref{GreenAsymp} and the line of reasoning from the proof of Theorem \ref{densityAsymp}.
\end{proof}

\begin{theorem}
	\label{GreenAsymp2}
	Assume that $\psi \in \RInf$, for some $\alpha \in [0, 2]$. Then
	\begin{align*}
		\lim _{r \to 0^+}
		\psi\big(r^{-1} \big)
		G\big(\{x : \norm{x} \leq r\}\big)
		= 
		\mathcal{\tilde{C}}_{d, \alpha}.
	\end{align*}
\end{theorem}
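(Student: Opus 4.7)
The plan is to mirror the proof of Theorem \ref{GreenAsymp}, swapping the role of zero and infinity. Set
\[
    f(r) = G\bigl(\{x : \norm{x} \leq \sqrt{r}\}\bigr).
\]
The Fourier inversion and the two applications of Fubini--Tonelli used in the proof of Theorem \ref{GreenAsymp} rely only on the isotropy of $\mathbf X$ and on transience (guaranteed by $d \geq 3$), so the identity
\[
    \calL f(\lambda) = \frac{2^{1-d}}{\Gamma(d/2)} \lambda^{-1} \int_0^\infty e^{-r^2/4} r^{d-1} \frac{{\rm d}r}{\psi(r\sqrt{\lambda})}
\]
remains valid without change.

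I then pass to the limit $\lambda \to +\infty$. For each fixed $r > 0$, regular variation of $\psi$ at infinity yields the pointwise convergence
\[
    \lim_{\lambda \to +\infty} \frac{\psi(\sqrt{\lambda})}{\psi(r\sqrt{\lambda})} = r^{-\alpha},
\]
while isotropic unimodality, through \eqref{eq:27} combined with \eqref{eq:2}, provides the uniform bound
\[
    \frac{\psi(\sqrt{\lambda})}{\psi(r\sqrt{\lambda})} \leq 2\pi^2 \bigl(r^{-2} + 1\bigr),
\]
independent of $\lambda$. After multiplication by $r^{d-1} e^{-r^2/4}$ this majorant is integrable precisely when $d \geq 3$, so dominated convergence gives
\[
    \lim_{\lambda \to +\infty} \psi(\sqrt{\lambda}) \cdot \lambda \calL f(\lambda) = 2^{-\alpha} \frac{\Gamma((d-\alpha)/2)}{\Gamma(d/2)}.
\]

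The proof closes with the Karamata Tauberian theorem in the regime $\lambda \to +\infty$ / $r \to 0^+$. Since $f$ is non-decreasing with $f(0^+) = 0$, and $1/\psi(\sqrt{\lambda})$ is regularly varying of index $-\alpha/2$ at infinity (its slowly varying part being the reciprocal of that of $\psi$), the variant of the Tauberian theorem that connects the behaviour of the Laplace--Stieltjes transform at infinity with the behaviour of the distribution function near the origin---which follows from \cite[Theorem 1.7.1']{bgt} by the standard inversion $\lambda \leftrightarrow 1/\lambda$---converts the preceding asymptotic into
\[
    f(r) \sim \frac{\mathcal{\tilde{C}}_{d, \alpha}}{\psi\bigl(r^{-1/2}\bigr)} \qquad (r \to 0^+).
\]
After the substitution $r = s^2$ this is precisely the statement of the theorem. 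The only real subtlety is identifying the correct direction of the Tauberian theorem; apart from that, the argument is a verbatim transcription of the proof of Theorem \ref{GreenAsymp}.
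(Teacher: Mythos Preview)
Your proposal is correct and matches the paper's intended approach: the paper states Theorem~\ref{GreenAsymp2} without proof, relying implicitly on the same Laplace--transform computation and Tauberian argument as in Theorem~\ref{GreenAsymp}, only with $\lambda \to +\infty$ in place of $\lambda \to 0^+$ and \cite[Theorem~1.7.1']{bgt} in place of \cite[Theorem~1.7.1]{bgt}. One minor remark: the claim $f(0^+)=0$ need not hold when $\alpha=0$ and $\psi$ is bounded (compound Poisson case, where $G(\{0\})=1/\nu(\RR^d)>0$), but the Tauberian theorem does not require it, so this does not affect the argument.
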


\begin{corollary}
	\label{cor:2}
	Assume that $\psi \in \RInf$, for some $\alpha \in (0, 2]$. Then
	\begin{align*}
		\lim _{x \to 0} \norm{x}^{d} \psi\big(\norm{x}^{-1}\big) G(x)= \mathcal{\tilde{A}}_{d,\alpha}.
	\end{align*}
\end{corollary}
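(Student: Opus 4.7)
My plan is to follow the strategy used to deduce Corollary \ref{cor:1} from Theorem \ref{GreenAsymp}, but now extracting the pointwise asymptotic near zero from the integrated asymptotic near zero supplied by Theorem \ref{GreenAsymp2}. The extraction itself is a Karamata-style monotone density argument, formally identical to the one used in the proof of Theorem \ref{densityAsymp}.

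Set $H(r) = G(\{x : \norm{x} \leq r\})$. Since $\mathbf X$ is isotropic unimodal, each $p(t, \cdot)$ is radial and radially non-increasing on $\RR^d \setminus \{0\}$; as $d \geq 3$ the process is transient, so $G(x) = \int_0^\infty p(t, x) {\: \rm d}t$ inherits the same radial monotonicity. Writing $G(u)$ for the common value of $G$ on the sphere of radius $u$, polar coordinates give, for $0 < a < b$,
\[
H(br) - H(ar) = c_d \int_{ar}^{br} u^{d-1} G(u) {\: \rm d}u, \qquad c_d = \frac{2\pi^{d/2}}{\Gamma(d/2)},
\]
and monotonicity of $G(\cdot)$ yields the sandwich
\[
\frac{c_d}{d}\big((br)^d - (ar)^d\big) G(br)
\leq H(br) - H(ar) \leq
\frac{c_d}{d}\big((br)^d - (ar)^d\big) G(ar).
\]

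Next, Theorem \ref{GreenAsymp2} gives $\psi(s^{-1}) H(s) \to \mathcal{\tilde{C}}_{d,\alpha}$ as $s \to 0^+$. Combined with $\psi \in \RInf$, which implies $\psi((br)^{-1})/\psi((ar)^{-1}) \to (a/b)^{\alpha}$ as $r \to 0^+$, this produces
\[
\lim_{r \to 0^+} \psi((br)^{-1})\big(H(br) - H(ar)\big) = \mathcal{\tilde{C}}_{d,\alpha}\big(1 - (a/b)^{\alpha}\big).
\]
Inserting the sandwich into this identity, choosing $b = 1$, $a = 1-\epsilon$ on one side and $a = 1$, $b = 1+\epsilon$ on the other, and letting $\epsilon \to 0^+$ exactly as in Theorem \ref{densityAsymp}, I read off
\[
\lim_{r \to 0^+} r^d \psi(r^{-1}) G(r) = \alpha\, \frac{\mathcal{\tilde{C}}_{d,\alpha}}{c_d}.
\]
A final gamma-function check using $\Gamma(1 + \alpha/2) = (\alpha/2)\Gamma(\alpha/2)$ converts the right-hand side to $\mathcal{\tilde{A}}_{d,\alpha}$, which is the claim.

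I do not anticipate any genuine obstacle: the architecture mirrors Corollary \ref{cor:1} and both ingredients---radial monotonicity of $G$ and the regularly-varying behavior of $\psi$ at infinity---come for free from the hypotheses. The only mild care is to keep the direction of inequalities consistent when swapping $\limsup$ and $\liminf$ with a sandwich that uses $G(br)$ on one side and $G(ar)$ on the other, together with the elementary limit $(1 - (1-\epsilon)^{\alpha})/(1 - (1-\epsilon)^d) \to \alpha/d$ as $\epsilon \to 0^+$ used to pass from the bound $d \mathcal{\tilde{C}}_{d,\alpha}(1 - (1-\epsilon)^{\alpha})/(c_d(1 - (1-\epsilon)^d))$ to $\alpha \mathcal{\tilde{C}}_{d,\alpha}/c_d$.
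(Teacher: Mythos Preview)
Your proposal is correct and follows essentially the same approach the paper intends: deduce the pointwise asymptotic from Theorem \ref{GreenAsymp2} by the monotone-density argument of Theorem \ref{densityAsymp}, exactly as Corollary \ref{cor:1} is derived from Theorem \ref{GreenAsymp}. The constant computation $\alpha\,\mathcal{\tilde{C}}_{d,\alpha}/c_d = \mathcal{\tilde{A}}_{d,\alpha}$ via $\Gamma(1+\alpha/2) = (\alpha/2)\Gamma(\alpha/2)$ is also correct.
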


\subsection{Examples}
We begin with a result which is helpful in verifying whether the L\'{e}vy--Khintchine exponent $\psi $ is regularly
varying.
\begin{proposition}
	Let $\mathbf X$ be a pure-jump isotropic unimodal L\'{e}vy processes with the L\'{e}vy density
	$\nu(r) = r^{-d} g\big(r^{-1}\big)$, for some $g:(0, \infty) \to [0, \infty)$. 
	\begin{enumerate}
		\item If $g \in \ROrg$ for some $\alpha\in(0,2)$, then $\psi\in \ROrg$ and 
		\[
			\lim_{r \to 0^+} \frac{g(r)}{\psi(r)}=\mathcal{A}_{d,\alpha}.
		\]
	\item If $g\in \RInf$  for some $\alpha\in(0,2)$, then $\psi\in \RInf$ and 
		\[
			\lim_{r\to +\infty}\frac{g(r)}{\psi(r)}=\mathcal{A}_{d,\alpha}.
		\]
	\end{enumerate}
\end{proposition}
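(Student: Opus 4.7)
The plan is to represent $\psi$ as the Mellin convolution of $g$ against a fixed radial kernel, apply a Karamata-type Abelian theorem, and identify the resulting constant. Since $\mathbf{X}$ is pure jump, writing \eqref{eq:28} (with $\eta = 0$) in polar coordinates and using $\nu(\rho) = \rho^{-d} g(\rho^{-1})$ gives, after the substitution $\rho = 1/u$,
\[
	\psi(r) = \calM(k, g)(r) = \int_0^\infty k(r/u)\, g(u)\, u^{-1}\, du,
\]
where $k(t) = \int_{\Ss^{d-1}} \big(1 - \cos(t \sprod{u_0}{u})\big) \sigma(du)$ and $u_0$ is any fixed unit vector. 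This is the representation already used in the proof of Theorem \ref{LevyDensity}.

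After the change of variable $u = rv$ one obtains
\[
	\frac{\psi(r)}{g(r)} = \int_0^\infty k(1/v)\, \frac{g(rv)}{g(r)}\, v^{-1}\, dv,
\]
and for each $v > 0$ the integrand tends to $k(1/v) v^{\alpha - 1}$ as $r \to 0^+$ in case (i), respectively as $r \to \infty$ in case (ii). I would justify the passage to the limit by splitting the domain at $v = \delta/r$ for a suitable fixed $\delta$: on the piece where both $r$ and $rv$ lie in the regime to which Potter's inequality \eqref{eq:14} applies, one gets a uniform integrable dominant $C k(1/v) v^{-1} \max\{v^{\alpha - \epsilon}, v^{\alpha + \epsilon}\}$, whose integrability uses $k(1/v) \leq (1 \wedge v^{-2}) \sigma(\Ss^{d-1})$ from \eqref{eq:50} together with $0 < \epsilon < \min\{\alpha, 2 - \alpha\}$. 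On the residual piece, undoing the substitution bounds the contribution by $r^{2} g(r)^{-1} \int_\delta^\infty g(u) u^{-3}\, du$ in case (i), respectively $g(r)^{-1} \int_0^\delta g(u) u^{-1}\, du$ in case (ii); the $u$-integrals are finite by L\'evy integrability of $\nu$, while the prefactor tends to zero because $\alpha \in (0, 2)$ and $g(r) \sim r^\alpha L(r)$ with $L$ slowly varying. This yields
\[
	\lim \frac{\psi(r)}{g(r)} = \int_0^\infty k(1/v) v^{\alpha - 1}\, dv =: \check{k}(\alpha).
\]

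To evaluate $\check{k}(\alpha)$, Fubini and the scaling $t = s/\abs{\sprod{u_0}{u}}$ factor it as
\[
	\check{k}(\alpha) = \bigg( \int_{\Ss^{d-1}} \abs{\sprod{u_0}{u}}^\alpha \sigma(du) \bigg) \int_0^\infty (1 - \cos s) s^{-\alpha - 1}\, ds.
\]
The radial factor is the classical $\pi / \big(2 \sin(\pi\alpha/2) \Gamma(1 + \alpha)\big)$ (via $\int_0^\infty \sin(s) s^{-\alpha}\, ds = \Gamma(1-\alpha) \sin(\pi(1-\alpha)/2)$, integration by parts and reflection). The spherical factor is a standard Beta integral equal to $2\pi^{(d-1)/2} \Gamma((\alpha + 1)/2)/\Gamma((d + \alpha)/2)$. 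Applying Legendre's duplication $\Gamma(\alpha/2)\Gamma((\alpha + 1)/2) = 2^{1-\alpha}\sqrt{\pi}\Gamma(\alpha)$ collapses the product to $\check{k}(\alpha) = 1/\mathcal{A}_{d, \alpha}$. Hence $g(r)/\psi(r) \to \mathcal{A}_{d, \alpha}$, and the asymptotic $\psi(r) \sim \mathcal{A}_{d, \alpha}^{-1} g(r)$ immediately forces $\psi$ into the same regular variation class as $g$.

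The main obstacle is the dominated convergence step, and specifically the residual range where $rv$ falls outside the window of regular variation of $g$: there Potter's estimate is unavailable, and one must instead trade it for L\'evy integrability of $\nu$ together with the scaling gain $r^{2}/g(r) \to 0$ (case (i)) or $1/g(r) \to 0$ (case (ii)), both of which rest on the restriction $\alpha \in (0, 2)$.
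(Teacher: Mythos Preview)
Your proof is correct and follows essentially the same route as the paper's: express $\psi$ as the Mellin convolution of $g$ against the spherical kernel $k$, justify the limit by dominated convergence via Potter's inequality on the regime where both arguments lie in the range of regular variation, and dispose of the residual piece using the L\'evy integrability of $\nu$ together with the scaling $r^{2}/g(r)\to 0$ (resp.\ $1/g(r)\to 0$). You go further than the paper in two places---you treat case~(ii) explicitly rather than by analogy, and you actually evaluate $\check{k}(\alpha)=\int_0^\infty k(\rho)\,\rho^{-1-\alpha}\,d\rho$ (the paper simply asserts this identity, and in fact writes $\mathcal{A}_{d,\alpha}$ where your computation shows the integral equals $\mathcal{A}_{d,\alpha}^{-1}$, consistent with $\psi/g\to\check{k}(\alpha)$ and the stated conclusion $g/\psi\to\mathcal{A}_{d,\alpha}$).
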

\begin{proof}
	For $r > 0$, we have (see \eqref{eq:40})
	\[
		\psi(r)
		= \int^\infty_0 k(r \rho) g(\rho^{-1}) \frac{{\rm d}\rho}{\rho}
		= \int^\infty_0 k( \rho) g(r\rho^{-1})\frac{{\rm d}\rho}{\rho}.
	\]
	Assume that $g\in \ROrg$. Let $0 < \varepsilon < 2-\alpha$ then, by \eqref{eq:14}, there is $0< \delta \leq 1$
	such that for $r,r\rho^{-1} \leq \delta$
	\[
		g\big(r \rho^{-1}\big) \leq 2 g(r) \rho^{-\alpha -\varepsilon}.
	\]
	Moreover, by \eqref{eq:50}
	\[
		\int^{r \delta^{-1}}_0 k( \rho) g\big(r\rho^{-1}\big) \frac{{\rm d}\rho}{\rho}
		= \int^{\delta^{-1}}_0 k( r\rho) g\big(\rho^{-1}\big)\frac{{\rm d}\rho}{\rho}
		\lesssim r^2.
	\]
	Hence, the first claim of the proposition is a consequence of the dominated convergence theorem and
	a fact that
	\[
		\mathcal{A}_{d,\alpha}
		=\int^\infty_0 k(\rho) \frac{{\rm d}\rho}{\rho^{1+\alpha}}.
	\]
	In the similar way one can prove the second claim.
\end{proof}
\begin{example}
	Let $\mathbf X$ be the relativistic stable process, i.e. $\psi(x)=(\norm{x}^2+1)^{\alpha/2}-1$, $\alpha\in(0,2)$.
	We have
	\begin{equation}
		\label{eq:41}
		\lim_{\atop{x \to 0}{t \norm{x}^{-\alpha} \to 0}}
		\frac{p(t, x)}{t \norm{x}^{-d - \alpha}} 
		= \mathcal{A}_{d, \alpha}.
	\end{equation}
\end{example}
\begin{example}
	Let $\nu(r)=r^{-d} g(r)$, $r\in(0,\infty)$ and $\alpha,\alpha_1\in(0,2)$. The same limit as \eqref{eq:41} exists and
	equals 1 for
	\begin{enumerate}
		\item truncated stable process: $g(r)=r^{-\alpha} \ind{(0, 1)}(r)$;
		\item tempered stable process: $g(r)=r^{-\alpha} e^{-r}$;
		\item isotropic Lamperti stable process: $g(r)=re^{\delta r}(e^r-1)^{-\alpha-1}$, $\delta<\alpha+1$;
		\item layered stable process: $g(r)=r^{-\alpha} \ind{(0,1)(r)} + r^{-\alpha_1} \ind{[1, \infty)}(r)$.
	\end{enumerate}
Since distributions of  processes (i)--(iii) have a finite second moment the dominated convergence theorem yields
\[
	\lim_{r\to 0^+} r^{-2} \psi(r)
	=
	2^{-1} \int^\infty_0 \rho g(\rho) {\: \rm d} \rho=c_1,
\]
what implies, for $d\geq 3$, 
\[
	\lim_{\norm{x} \to +\infty} \norm{x}^{2-d} G(x)
	=
	c_1 \mathcal{\tilde{A}}_{d,2}.
\]
\end{example}

\begin{example}
	Let 
	\[
		\psi(\xi)=\norm{\xi}^{\alpha} \log^{\beta}(1+\norm{\xi}^{\gamma}),
	\]
	where $\gamma,\alpha,\alpha+2\beta\in (0,2)$.  We note that $\psi$ is the L\'{e}vy--Khintchine exponent of
	a subordinate Brownian motion, see \cite[Theorem 12.14, Proposition 7.10, Proposition 7.1, Corollary 7.9,
	Section 13 and examples 1 and 26 from Section 15.2]{ssv}. For this process we have
	\[
		\lim _{\atop{x \to 0}{t \norm{x}^{-\alpha} \log^\beta{\norm{x}} \to 0}}
		\frac{p(t, x)}{t \norm{x}^{-d - \alpha} \log^\beta \norm{x}} 
		= \gamma^{\beta}\mathcal{ A}_{d, \alpha},
	\]
	and
	\[
		\lim _{\atop{\norm{x} \to +\infty} {t \norm{x}^{-\alpha-\gamma\beta} \to 0}}
		\frac{p(t, x)}{t \norm{x}^{-d - \alpha-\gamma\beta}} =\mathcal{ A}_{d, \alpha+\gamma\beta}.
	\]
\end{example}

\begin{example}
	Let $\alpha\in(0,2)$ and $\beta\in\RR$ and $\mathbf X$ be isotropic unimodal with the L\'{e}vy density
	$\nu(r)=r^{-d-\alpha} \log^\beta(1+e^\beta+r)$. Then
	\[
		\lim _{\atop{\norm{x} \to \infty}{t \norm{x}^{-\alpha} \log^\beta{\norm{x}} \to 0}}
		\frac{p(t, x)}{t \norm{x}^{-d - \alpha} \log^\beta \norm{x}} 
		= 1.
	\]

\end{example}
\begin{example}
	Let $\mathbf X$ be the gamma variance process, i.e. $\psi(\xi)=\log(1+\norm{\xi}^2)$. Then
	\[
		\lim_{\atop{r \to 0^+}{t \abs{\log r} \to 0}}
		\frac{\PP(\norm{X_t} > r)}{t \abs{\log r}} 
		= 2.
	\]
\end{example}

\providecommand{\bysame}{\leavevmode\hbox to3em{\hrulefill}\thinspace}
\providecommand{\MR}{\relax\ifhmode\unskip\space\fi MR }
% \MRhref is called by the amsart/book/proc definition of \MR.
\providecommand{\MRhref}[2]{%
  \href{http://www.ams.org/mathscinet-getitem?mr=#1}{#2}
}
\providecommand{\href}[2]{#2}

\end{document}